\newtheorem{theorem}{Theorem}
\newtheorem{lemma}[theorem]{Lemma}
\theoremstyle{definition}
\newtheorem{definition}{Definition}
\newtheorem{remark}{Remark}
\newcommand{\R}{{\mathbb R}}
\newcommand{\Z}{{\mathbb Z}}
\newcommand{\U}{{\mathcal U}}
\newcommand{\G}{{\mathcal G}}
\newcommand{\ep}{\varepsilon}
\newcommand{\Id}{\operatorname{Id}}
\def\XXint#1#2#3{{\setbox0=\hbox{$#1{#2#3}{\int}$} \vcenter{\hbox{$#2#3$}}\kern-.5\wd0}}
\DeclareMathOperator{\supp}{supp}
\newcommand{\e}{\varepsilon}
\title[Eigenfunction {averages}]{Averages of Eigenfunctions Over Hypersurfaces}
\author{Yaiza Canzani}
\address{Department of Mathematics, University of North Carolina, Chapel Hill, NC, USA}
\email{canzani@email.unc.edu}
\author{Jeffrey Galkowski}
\address{Department of Mathematics, Stanford University, Stanford, CA, USA}
\email{jeffrey.galkowski@stanford.edu }
\author{John A. Toth}
\address{Department of Mathematics and Statistics, McGill University, Montr\'eal, QC, Canada}
\email{jtoth@math.mcgill.ca}
\begin{document}
\maketitle
\begin{abstract} Let $(M,g)$ be a compact, smooth,  Riemannian manifold and $\{ \phi_h \}$  an $L^2$-normalized sequence of Laplace eigenfunctions with defect measure $\mu$. Let $H$ be a smooth hypersurface with unit exterior normal $\nu.$
Our main result  says  that  when $\mu$ is {\em not} concentrated conormally to $H$, the eigenfunction restrictions to $H$ satisfy
$$ \int_H \phi_h d\sigma_H = o(1) \qquad  \text{and}\qquad  \int_H h D_{\nu} \phi_h d\sigma_H = o(1),$$
$h \to 0^+$.
\end{abstract}

\section{Introduction}

On a compact Riemannian manifold $(M,g)$, with no boundary, consider a sequence of Laplace eigenfunctions $\{\phi_h\}$, 
\[-h^2 \Delta_g \phi_h=\phi_h,\]
normalized so that $\|\phi_h\|_{L^2(M)}=1$. The goal of this article is to study the average oscillatory behavior of $\phi_h$ when restricted to a hypersurface $H \subset M$.  Namely, the goal is to find a condition on the pair $(\{\phi_h\}, H)$ so that
\begin{equation}\label{E:goal}
\int_H \phi_h \, d\sigma_{{H}}=o(1),
\end{equation}
as $h \to 0^+$, where $\sigma_H$ denotes the hypersurface measure on $H$ induced by the Riemannian structure. 

It is important to point out that one cannot always expect to observe this oscillatory decay. For instance, on the round sphere, zonal harmonics of even degree integrate to a constant along the equator. Also, for any closed geodesic inside the square flat torus. there is a sequence of eigenfunctions that  integrate to a non-zero constant.

Integrals of the form ~\eqref{E:goal} have been studied for quite some time, going back to the work of Good~\cite{Good} and Hejhal~\cite{Hej} that treated the case where $H$ is a periodic geodesic inside a compact hyperbolic manifold. These authors proved that in such a case,  $\int_H \phi_h \, d\sigma_{{H}}=O(1)$ as $h\to 0^+$. Zelditch~\cite{Zel} {generalized this} to the case where $H$ is any hypersurface inside a compact manifold, {showing that for any hypersurface $H$,
\begin{equation}
\label{e:stdBound}
\int_H\phi_hd\sigma_H=O(1).
\end{equation} }
{In addition,} it follows from \cite{Zel} that for a density one subsequence of eigenvalues $\{h_j\}_j,$ one has $\lim_{j\to \infty}\int_H \phi_{h_j} \, d\sigma_H=0.$
Moreover, one can actually get an explicit polynomial bound of the form $O(h^{1/2 - 0})$ for the rate of decay of expectations for the density-one subsequence (see \cite{JZ}). However, the latter estimate is not satisfied for all eigenfunctions and it is not clear which sequence of  eigenfunctions must be removed for the estimate to hold.  There are several articles that address this issue by restricting to special cases of  Riemannian surfaces $(M,g)$ and  special curves $H \subset M.$  Working on surfaces of strictly negative curvature, and choosing $H$ to be a geodesic,  Chen-Sogge \cite{CS} {proved $\int_H \phi_h \, d\sigma_{{H}} =o(1)$}.  Subsequently, Sogge-Xi-Zhang \cite{SXZ} obtained a $O((\log h)^{-1/2})$ bound on the rate of decay under a relaxed curvature condition. Recently, working on surfaces of non-positive curvature Wyman \cite{Wym} obtained \eqref{E:goal} when assuming curvature conditions on $H$.
Finally, we remark that on average, one expects $\int_H \phi_h \, d\sigma_{{H}} \asymp h^{\frac{1}{2}}$ (see \cite{Esw}).

In this article we focus on establishing~\eqref{E:goal} given explicit conditions on the sequence of eigenfunctions $\{\phi_h\}$. We do not impose any geometric conditions on $(M,g)$, nor do we assume it is a surface. Furthermore, we do not restrict our attention to geodesic curves and allow $H$ to be any hypersurface in $M$.  Instead, we prove that~\eqref{E:goal} holds provided that the sequence $\{\phi_h\}$ does not asymptotically concentrate in the conormal direction $N^*H$ to $H$. One example where this holds is the case quantum ergodic sequences of eigenfunctions and any hypersurface $H$.

\subsection{Statements of the results} 
Let $H\subset M$ be a closed smooth hypersurface, and write $S^*_HM \subset S^*M$ for the space of unit covectors with foot-points in $H$, and $S^*H$ for the set of unit covectors tangent to $H$. \
We fix $t_0>0$ small enough and define a measure $\mu_H$ on $S^*_HM \subset S^*M$  by 
\begin{equation} \label{e:muH2}
\mu_H(A) :=\frac{1}{2t_0}\, \mu\Big( \bigcup_{|s|\leq t_0}G^s(A)\Big),
\end{equation}
where  $G^t:S^*M\to S^*M$ denotes the geodesic flow.
Remark ~\ref{l:pushforward} shows that if $A \subset S^*_HM$ is so that  $\overline{A} \subset S_H^*M \backslash S^*H$, then $\mu_H(A)$ is independent of the choice of $t_0$ and it is natural to replace fixed $t_0$ with $\lim_{t_0\to 0}$.
\begin{definition} \label{defn}
We say that $\mu$ is \emph{conormally diffuse with respect to $H$} if \[\mu_H(N^*H)=0.\]
If  $U\subset H$ is open, we say that $\mu$ is \emph{conormally diffuse with respect to $H$ over $U$} if 
\[\mu_H(N^*H\cap S^*_UM)=0.\]
\end{definition}

 As an example, this condition is satisfied when $\{\phi_h\}$ is a quantum ergodic (QE) sequence and $\mu = \mu_L$, the Liouville measure on $S^*M.$
Note that the QE condition is much stronger than the assumption in Definition \ref{defn}.
In Section \ref{S: examples} we give examples of hypersurfaces and sequences of eigenfunctions for which the defect measure is conormally diffuse but is not absolutely continuous with respect to the Liouville measure.
Our main result is the following.
\begin{theorem}\label{T: main}
Let $H \subset M$ be a closed hypersurface. Let  $\{\phi_h\}$ be a sequence of eigenfunctions associated to a defect measure $\mu$ that is conormally diffuse with respect to $H$. Then,
\[\int_H \phi_{h}\, d{\sigma}_H=o(1),\]
and 
\[\int_H  h \partial_\nu \phi_{h}\, d{\sigma}_H=o(1),\]
as $h  \to 0^+$.
\end{theorem}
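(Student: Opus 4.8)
The plan is to reduce both integrals to a microlocal statement on $S_H^*M$ and then exploit the conormal diffuseness hypothesis. Since $\int_H \phi_h\,d\sigma_H = \langle \phi_h, \delta_H\rangle$ and $\int_H hD_\nu\phi_h\,d\sigma_H = \langle hD_\nu\phi_h, \delta_H\rangle$, where $\delta_H$ is the single-layer distribution on $H$, I would first localize: choose a partition of unity on $S^*_HM$ separating a small conic neighborhood $\Gamma_\delta$ of $N^*H$ from its complement. On the complement, the restriction operator $\gamma_H: \phi_h \mapsto \phi_h|_H$ is (microlocally) bounded with a gain: standard restriction estimates (Burq--Gérard--Tzvetkov, Tataru, Hörmander) give that the tangential frequencies $\eta$ with $|\eta|<1$ dominate, and the operator norm of $\gamma_H$ composed with a cutoff to $\{|\eta|\le 1-\epsilon\}$ is $O(1)$; combined with integration over $H$ (which is itself an oscillatory integral with critical points only where the conormal direction appears) this piece is $o(1)$. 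Concretely, I would write $\int_H \phi_h\,d\sigma_H = \int_H (\mathrm{Op}_h(a)\phi_h)\,d\sigma_H + \int_H (\mathrm{Op}_h(1-a))\phi_h\,d\sigma_H$ where $a$ is supported near $N^*H$, and show the second term is $O(h^{1/2})$ by a non-stationary phase / known restriction bound argument.

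The main term is thus $\int_H (\mathrm{Op}_h(a)\phi_h)\,d\sigma_H$ with $a$ supported in an arbitrarily small conic neighborhood of $N^*H$. Here I would use the defect measure: by definition of $\mu$ and the microlocal mass near $N^*H$, one has $\|\mathrm{Op}_h(a)\phi_h\|^2_{L^2} \to \int |a|^2\,d\mu$ (up to flow-out considerations). The quantity $\mu_H(N^*H)$ encodes exactly the $\mu$-mass of geodesics passing through $H$ conormally; the hypothesis $\mu_H(N^*H)=0$ should force, after shrinking the support of $a$, that $\|\mathrm{Op}_h(a)\phi_h\|_{L^2}$ contributes only $o(1)$ to the average. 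The link between the $\mu$-mass near $N^*H$ and the size of the average over $H$ is precisely the content of the propagation/pushforward setup in \eqref{e:muH2} and Remark~\ref{l:pushforward}: integrating $\phi_h$ over $H$ against the measure $d\sigma_H$ can be rewritten, via the half-wave propagator applied over a short time window $|s|\le t_0$, as a spatial average of $|\phi_h|^2$-type quantities on a tube around $H$, which is controlled by $\mu_H$ evaluated near $N^*H$.

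For the normal-derivative integral, I would run the parallel argument, noting $hD_\nu$ acts on the symbol side by multiplication by the normal covariable $\xi_\nu$, which vanishes on $S^*H$ and is bounded by $1$; so the same cutoff $a$ near $N^*H$ captures the main term, and the complementary piece is again handled by restriction bounds for $hD_\nu\phi_h|_H$ (these are $O(1)$ in $L^2(H)$ by the work of Christianson--Hassell--Toth or Hörmander, with a gain away from conormal directions). The hard part — the crux of the whole argument — will be making the passage "small $\mu_H$-mass near $N^*H$ $\Rightarrow$ small average" quantitative and uniform: one must upgrade the qualitative vanishing $\mu_H(N^*H)=0$ to a genuine rate $o(1)$, which requires an $\varepsilon$-$\delta$ argument choosing the conic neighborhood $\Gamma_\delta$ first (using regularity of $\mu_H$ and that $N^*H$ is a closed set of $\mu_H$-measure zero, so its neighborhoods have measure $\to 0$), then letting $h\to 0$, and finally controlling the error terms uniformly in $\delta$. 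I expect this interchange of limits, together with the precise second-microlocal bookkeeping needed to see that the restriction operator's main contribution really is governed by $\mu_H$ near $N^*H$ rather than by some larger region, to be the technical heart of the proof.
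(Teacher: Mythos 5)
Your opening reduction is the same as the paper's: you cut off to a small neighborhood of the conormal direction and observe that the complementary piece of $\int_H\phi_h\,d\sigma_H$ is negligible because the constant function $1$ on $H$ pairs non-stationarily with any symbol vanishing near $\xi'=0$ (the paper's Lemma \ref{L: microlocalize} gets $O(h^\infty)$ this way; no restriction estimates are needed for that step). The problem is everything after that. Your treatment of the main term rests on the assertion that $\|\mathrm{Op}_h(a)\phi_h\|_{L^2}^2\to\int|a|^2\,d\mu$ controls the average over $H$, but that is an $L^2(M)$ statement and $H$ has measure zero in $M$: the defect measure gives you no direct information about $\phi_h|_H$ or $\mathrm{Op}_h(a)\phi_h|_H$. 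Knowing the interior mass near $N^*H$ is small only yields, via restriction bounds away from glancing, that $\|\mathrm{Op}_h(a)\phi_h\|_{L^2(H)}=O(1)$ --- not $o(1)$. You acknowledge this yourself when you call the passage from ``small $\mu_H$-mass near $N^*H$'' to ``small average'' the crux and defer it to a ``half-wave propagator / tube average'' heuristic, but that heuristic is precisely the step that needs a proof, and you do not supply one.

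The paper's mechanism for this missing step is a Rellich (Green's formula) commutator identity. One takes the test operator $A_{\delta,\alpha}(h)=\mathrm{Op}_h(\beta_\delta^2)\,\chi_\alpha(x_n)\,hD_{x_n}$ and writes
\begin{equation*}
\frac{i}{h}\int_{\Omega_H}[-h^2\Delta_g,A_{\delta,\alpha}(h)]\phi_h\,\overline{\phi_h}\,dv_g
=\int_H A_{\delta,\alpha}\phi_h\,\overline{hD_\nu\phi_h}\,d\sigma_H+\int_H hD_\nu\bigl(A_{\delta,\alpha}\phi_h\bigr)\,\overline{\phi_h}\,d\sigma_H .
\end{equation*}
The right side, after using $\gamma_H(h^2D_\nu^2\phi_h)=(I+h^2\Delta_{g_H})\gamma_H\phi_h+O(h^{3/4})$ and the sharp G{\aa}rding inequality, dominates $\|\mathrm{Op}_h(\beta_\delta)hD_\nu\phi_h\|_{L^2(H)}^2+(1-2\delta^2)\|\mathrm{Op}_h(\beta_\delta)\phi_h\|_{L^2(H)}^2$ up to $O(h^{1/2})$; the left side is an interior quantity that the defect measure \emph{can} evaluate, and its limit as $h\to0$, then $\alpha\to0$, then $\delta\to0$ is $2\mu_{\Sigma}(N^*H)$, computed via the disintegration $\mu=|\xi_n|^{-1}d\mu_{\Sigma}\,dx_n$ of the invariant measure along the flow (Lemmas \ref{l:invariance}--\ref{l:defectInvariance}). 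This identity is exactly the bridge from the interior object $\mu$ to the codimension-one quantities you need, and it is what turns the qualitative hypothesis $\mu_H(N^*H)=0$ into the $o(1)$ conclusion. Without it (or an equivalent substitute), your argument does not close.
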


{\begin{remark} The proof of Theorem \ref{T: main} actually shows that $\int_{H} \phi_h \chi d\sigma_H = o(1)$ for any $\chi \in C^{\infty}(H).$ 
We note also that the methods of this paper give another independent proof of~\eqref{e:stdBound}.
\end{remark}
}

As we have already pointed out, the Liouville measure $\mu = \mu_L$ is conormally diffuse. Consequently, the following result is a corollary of Theorem \ref{T: main}:
\begin{theorem} \label{cor1}
Le $H \subset M$ be a closed hypersurface and $\{ \phi_h \}$ be any 
QE sequence sequence of eigenfunctions. Then,
\[\int_H \phi_{h}\, d{\sigma}_H=o(1) \qquad \text{and} \qquad  \int_H h \partial_\nu \phi_{h}\, d{\sigma}_H=o(1).\]

\end{theorem}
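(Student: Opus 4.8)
The final statement to prove is Theorem~\ref{cor1}, which asserts that $\int_H \phi_h\,d\sigma_H = o(1)$ and $\int_H h\partial_\nu\phi_h\,d\sigma_H = o(1)$ for any QE sequence $\{\phi_h\}$. The plan is to deduce this directly from Theorem~\ref{T: main} by verifying that the Liouville measure $\mu_L$ is conormally diffuse with respect to any closed hypersurface $H$, in the sense of Definition~\ref{defn}. Since a QE sequence (by definition) has defect measure $\mu = \mu_L$, the Liouville measure on $S^*M$, it suffices to check the single hypothesis $\mu_L{}_{,H}(N^*H) = 0$, where $\mu_L{}_{,H}$ is the measure on $S^*_HM$ defined by~\eqref{e:muH2}.

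First I would unpack the definition~\eqref{e:muH2}: for a set $A \subset S^*_HM$,
\[
(\mu_L)_H(A) = \frac{1}{2t_0}\,\mu_L\Big(\bigcup_{|s|\le t_0} G^s(A)\Big).
\]
Taking $A = N^*H$, the conormal sphere bundle, I would show that the tube $\bigcup_{|s|\le t_0} G^s(N^*H)$ has Liouville measure zero. The key geometric observation is that $N^*H$ is a codimension-one submanifold of $S^*M$ (it has dimension $\dim M - 1 = 2n - 2$ inside the $(2n-1)$-dimensional energy surface $S^*M$, since $H$ has dimension $n-1$ and the conormal fiber over each point is $0$-dimensional in $S^*H$... more precisely $N^*H \cap S^*M$ consists of the two unit conormals over each point of $H$, so it has dimension $n-1$). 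Flowing a submanifold of dimension $n-1$ for time in a one-parameter interval produces a set of dimension at most $n$, which for $n \ge 2$ is strictly less than $\dim S^*M = 2n-1$; hence it is a Lebesgue-null (measure-zero) subset of $S^*M$, and so has vanishing Liouville measure. Therefore $(\mu_L)_H(N^*H) = 0$, i.e.\ $\mu_L$ is conormally diffuse with respect to $H$.

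Then Theorem~\ref{cor1} follows immediately by applying Theorem~\ref{T: main} with $\mu = \mu_L$. I would also note the degenerate low-dimensional cases: if $\dim M = 1$ there is nothing to prove (a hypersurface is a point), and the interesting content is $\dim M \ge 2$, where the dimension count above is effective.

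The main (and essentially only) obstacle is making the dimension-counting argument clean: one must ensure that $G^s$ is a smooth map on $S^*M$ so that the image of the compact manifold-with-boundary $[-t_0,t_0] \times N^*H$ under $(s,\xi) \mapsto G^s(\xi)$ is a smooth (Lipschitz) image of a manifold of dimension $n < 2n-1$, hence null. This is standard since the geodesic flow is smooth on the cosphere bundle of a smooth compact manifold, and $N^*H$ is a smooth compact submanifold because $H$ is a smooth closed hypersurface; so no real difficulty arises. One should take $t_0$ small enough that the various objects in Definition~\ref{defn} are defined, but the null conclusion holds for every $t_0$.
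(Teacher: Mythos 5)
Your proposal is correct and follows the same route as the paper: Theorem~\ref{cor1} is deduced from Theorem~\ref{T: main} by checking that the defect measure of a QE sequence is the Liouville measure and that $\mu_L$ is conormally diffuse with respect to any closed hypersurface. Your verification via the dimension count (the flow-out $\bigcup_{|s|\le t_0}G^s(N^*H\cap S^*M)$ is a Lipschitz image of an $n$-dimensional compact set inside the $(2n-1)$-dimensional manifold $S^*M$, hence Liouville-null for $n\ge 2$) is sound and equivalent to the paper's observation that $d(\mu_L)_{\Sigma_\delta}=c\,dx'\,d\xi'$ assigns zero mass to $\{\xi'=0\}$; the initial miscount ``$2n-2$'' is harmless since you immediately correct it to $n-1$.
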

By Lindenstrauss' celebrated result \cite{L}, Hecke eigenfunctions on compact, arithmetic hyperbolic surfaces are all QE (ie. they are quantum uniquely ergodic (QUE)). Together with Theorem \ref{cor1} this yields
\begin{theorem} \label{cor2}
Let $(H/\Gamma,g)$ be a compact, arithmetic surface and  $H \subset M$ be a closed, $C^{\infty}$ curve. Then, for all Hecke eigenfunctions $\{\phi_h\},$
\[\int_H \phi_{h}\, d{\sigma}_H=o(1)  \qquad \text{and} \qquad   \int_H h \partial_\nu \phi_{h}\, d{\sigma}_H=o(1).\]
\end{theorem}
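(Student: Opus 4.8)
The plan is to obtain Theorem \ref{cor2} as a direct specialization of Theorem \ref{cor1}, with Lindenstrauss' quantum unique ergodicity theorem supplying the one external input needed to place Hecke eigenfunctions inside the QE regime already handled there. First I would recall the setup: $M$ is a compact arithmetic hyperbolic surface (a quotient of the upper half-plane by an arithmetic lattice $\Gamma$), and $\{\phi_h\}$ is a sequence of Hecke eigenfunctions, i.e.\ joint eigenfunctions of the Laplacian and the Hecke operators. By Lindenstrauss' theorem \cite{L}, such functions are QUE: the microlocal (Wigner) lifts of \emph{any} sequence of Hecke eigenfunctions converge weak-$*$ to the normalized Liouville measure $\mu_L$ on $S^*M$. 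In particular every such sequence has a well-defined defect measure, and that defect measure equals $\mu_L$ no matter how one passes to subsequences — this is exactly the force of the word ``unique'' in QUE, and it is strictly stronger than generic quantum ergodicity, which only controls a density-one subsequence.

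Second, I would invoke the observation already recorded before Theorem \ref{cor1}: the Liouville measure $\mu_L$ is conormally diffuse with respect to every closed hypersurface $H$, that is, $\mu_L$ assigns no mass to the conormal bundle $N^*H$ in the sense of Definition \ref{defn} (the conormal directions over $H$ form a positive-codimension set in $S^*M$). Since a closed $C^\infty$ curve $H$ in the surface $M$ is precisely a closed hypersurface, the hypothesis of Theorem \ref{T: main}, equivalently of Theorem \ref{cor1}, is met by $\{\phi_h\}$. Applying Theorem \ref{cor1} to the QE sequence $\{\phi_h\}$ and the curve $H$ then yields both $\int_H \phi_h \, d\sigma_H = o(1)$ and $\int_H h \partial_\nu \phi_h \, d\sigma_H = o(1)$.

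To present the $o(1)$ conclusion cleanly for the full sequence rather than merely along a subsequence, I would run the standard extraction argument. If $\int_H \phi_h \, d\sigma_H$ failed to tend to $0$, there would exist $\delta > 0$ and a subsequence $h_j \to 0^+$ with $\left|\int_H \phi_{h_j} \, d\sigma_H\right| \geq \delta$; passing to a further subsequence possessing a defect measure, QUE forces that measure to be $\mu_L$, which is conormally diffuse, so Theorem \ref{cor1} gives $\int_H \phi_{h_j} \, d\sigma_H \to 0$ along the subsequence, a contradiction. The identical argument applies verbatim to the normal-derivative averages $\int_H h \partial_\nu \phi_{h_j} \, d\sigma_H$.

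The step I expect to carry the real analytic weight lives entirely outside this paper: it is Lindenstrauss' QUE theorem, which produces the equidistribution $\mu = \mu_L$ for Hecke eigenfunctions. Once that input is granted there is no further obstacle — the corollary is a one-line specialization of Theorem \ref{cor1} — and the only points demanding care are the subsequence bookkeeping above and the elementary fact, already noted in the text, that $\mu_L$ charges no conormal bundle.
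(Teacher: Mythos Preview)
Your proposal is correct and matches the paper's own argument essentially verbatim: the paper simply notes that Lindenstrauss' QUE theorem makes every Hecke eigenfunction sequence QE with defect measure $\mu_L$, and then invokes Theorem \ref{cor1}. Your additional subsequence extraction is a welcome clarification but introduces no new idea beyond what the paper intends.
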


One can localize the results in Theorems \ref{T: main}-\ref{cor2}. In the following, we write $ d\sigma_H $ for the measure on $H$ induced by the Riemannian structure.

\begin{theorem} \label{local}
Let $(M,g)$ be a smooth, closed Riemannian manifold and $H \subset M$ be a closed hypersurface with $A \subset H$ a subset with piecewise $C^{\infty}$ boundary and suppose $U\subset H$ is open with $\overline{A}\subset U$. Let $\{\phi_h\}$ be  a sequence of eigenfunctions with defect measure $\mu$ conormally diffuse with respect to $H$ over $U$. Then, \[\int_A \phi_{h}\, d{\sigma}_H=o(1),\]
and 
\[\int_A  h \partial_\nu \phi_{h}\, d{\sigma}_H=o(1),\]
as $h  \to 0^+$.
\end{theorem}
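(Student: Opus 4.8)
The plan is to reduce the global statement Theorem~\ref{T: main} to a local microlocal computation and then to localize everything to $A$ for Theorem~\ref{local}. The first step is to rewrite $\int_H \phi_h\, d\sigma_H$ using a defining function for $H$. Working in Fermi normal coordinates $(x',x_n)$ near $H$, with $H = \{x_n = 0\}$, one has, for any $\psi \in C_c^\infty$ supported near $H$ and equal to $1$ on a neighborhood of $A$, the identity $\int_H \phi_h\, d\sigma_H = \langle \phi_h, \psi \,\delta_{H}\rangle$ up to smoothing terms, and $\delta_H$ can be realized as $(2\pi h)^{-1}\int e^{i x_n \xi_n/h}\, d\xi_n$ (an oscillatory integral in the normal direction). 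Equivalently, I would introduce the operator $R_h$ of restriction to $H$ followed by integration, express it via a semiclassical Fourier integral operator whose canonical relation is supported over $N^*H$, and compute its action on $\phi_h$ using the microlocal structure of the eigenfunction.

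The key steps, in order: (1) Decompose $1 = \chi_{\text{conormal}} + \chi_{\text{non-conormal}}$ microlocally on $S^*_HM$, where $\chi_{\text{conormal}}$ is a cutoff supported in a small conic neighborhood of $N^*H$ and $\chi_{\text{non-conormal}}$ is supported away from $N^*H$. (2) For the non-conormal piece: here $\xi'\neq 0$ in the Fermi coordinates, so on the support of the symbol the phase $x_n\xi_n$ in the oscillatory representation of $\delta_H$ can be integrated by parts (or, better, one uses that restriction to $H$ of an $h$-pseudo whose wavefront avoids $S^*H$... actually the relevant statement is that one can integrate by parts in $x'$ using $\xi'\neq 0$) to gain arbitrary powers of $h$ — so this contribution is $O(h^\infty)$, or at worst $o(1)$ by a standard non-stationary phase / non-characteristic argument combined with the $L^2$-normalization and the restriction bounds of Burq--G\'erard--Tzvetkov and Tataru. (3) For the conormal piece: its contribution is controlled by the mass $\|\chi_{\text{conormal}}^w \phi_h\|_{L^2}$ which, by the defect-measure characterization and the propagation of the defect measure along the geodesic flow (the averaging over $|s|\le t_0$ in \eqref{e:muH2}), converges to something bounded by $\mu_H$ of the conic support of $\chi_{\text{conormal}}$; shrinking this neighborhood and invoking $\mu_H(N^*H)=0$ (conormal diffuseness) together with outer regularity of $\mu_H$ forces this to be $o(1)$. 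The normal-derivative statement $\int_H hD_\nu\phi_h\, d\sigma_H = o(1)$ is handled identically, with the extra factor $\xi_n$ in the symbol, which is bounded on $S^*M$ and vanishes nowhere relevant that would spoil the argument.

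For Theorem~\ref{local} specifically, I would repeat the above with an additional honest cutoff $a \in C_c^\infty(H)$ that is $1$ on $A$ and supported in $U$ (this is possible since $\overline A \subset U$; the piecewise-$C^\infty$ boundary of $A$ only matters for making sense of $\int_A$ and contributes lower-order boundary terms that are $O(h)$ after one integration by parts in $x'$, or can be absorbed by a smooth approximation of $\mathbf 1_A$ from inside and outside $U$). Then the conormal piece is supported in $N^*H \cap S^*_UM$, whose $\mu_H$-measure is zero by the hypothesis of conormal diffuseness over $U$, and the rest of the argument goes through verbatim. The passage from $\int_A$ to $\int_H a$ costs only $o(1)$ once one checks, using the restriction estimates, that $\int_{H}(\mathbf 1_A - a_\epsilon)\phi_h\, d\sigma_H$ is small uniformly in $h$ for $a_\epsilon$ approximating $\mathbf 1_A$ — here one uses $\|\phi_h\|_{L^2(H)} = O(h^{-1/4})$ is too weak, so instead one keeps $\mathbf 1_A$ exactly and notes its (distributional) conormal singularities along $\partial A$ are themselves conormal to a hypersurface-within-$H$ and thus, after the normal-direction oscillatory integral, still land in the region where either non-stationary phase applies or the defect measure mass is controlled.

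The main obstacle I anticipate is step (2)--(3) interface: making the $o(1)$ bound on the conormal piece genuinely quantitative via the defect measure. The subtlety is that $\delta_H$ is not a classical symbol — it is a conormal distribution of order $0$, so $\chi_{\text{conormal}}^w \delta_H$ is not $O_{L^2}(1)$ uniformly as the conic neighborhood shrinks; one needs to exploit the averaging along the flow in the definition \eqref{e:muH2} (equivalently, a $TT^*$-type argument propagating for time $t_0$) to convert the restriction pairing into an honest $L^2\to L^2$ mass estimate to which the defect measure applies. Concretely: $\big|\int_H \chi_{\text{conormal}}^w\phi_h\, d\sigma_H\big|^2 \lesssim t_0^{-1}\int_{-t_0}^{t_0}\|\,\text{(something)}\,\phi_h\|^2\,ds + o(1)$, and the right side converges to $\lesssim \mu_H(\text{conic nbhd of }N^*H)$, which is the point where \eqref{e:muH2} and Definition~\ref{defn} are used. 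Getting this averaging argument clean — and verifying the error terms in the non-stationary region really are $o(1)$ and not merely $O(1)$ — is where the real work lies; the rest is bookkeeping with Fermi coordinates and standard semiclassical calculus.
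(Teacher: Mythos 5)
Your high-level decomposition (non-stationary phase away from $N^*H$, defect-measure control near $N^*H$) matches the skeleton of the paper's argument, and you have correctly located the real difficulty --- but that difficulty is exactly the step you defer, and it is the heart of the proof, so as written there is a genuine gap. The paper does \emph{not} bound the conormal piece by an averaged $TT^*$ estimate along the flow; it proves the stronger statement \eqref{E: to prove}, namely $\|Op_h(\beta_\delta)\phi_h\|_{L^2(H)}\leq \ep$ and $\|Op_h(\beta_\delta)hD_\nu\phi_h\|_{L^2(H)}\leq \ep$, via the Rellich identity \eqref{E: Green} applied to $A_{\delta,\alpha}(h)=Op_h(\beta_\delta^2)\,\chi_\alpha(x_n)\,hD_\nu$. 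The two boundary terms there are, up to $O(h^{1/2})$ errors (using the restriction bounds $O(h^{-1/4})$ and $O(1)$ plus sharp G\aa rding), bounded below by $\|Op_h(\beta_\delta)hD_\nu\phi_h\|_H^2+(1-2\delta^2)\|Op_h(\beta_\delta)\phi_h\|_H^2$, while the interior commutator term is an honest compactly supported symbol to which the defect measure applies; the factorization $\mu=|\xi_n|^{-1}d\mu_{\Sigma_\gamma}\,dx_n$ of Lemma~\ref{l:defectInvariance} then shows that term converges (as $h\to0$, then $\alpha\to0$, then $\delta\to0$) to $2\mu_{\Sigma_\gamma}(N^*H)=0$. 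Your proposed flow-averaging bound $\bigl|\int_H\chi^w\phi_h\bigr|^2\lesssim t_0^{-1}\int_{-t_0}^{t_0}\|\cdot\|^2\,ds+o(1)$ is morally the same computation and could likely be made to work in the transversal regime, but you have not supplied it, and without it the conormal piece is simply uncontrolled: the defect measure on $M$ says nothing directly about $L^2(H)$ restriction norms, since $H$ has measure zero.

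The second gap is the passage from $\mathbf 1_A$ to a smooth cutoff. You correctly observe that $\|\phi_h\|_{L^2(H)}=O(h^{-1/4})$ is too weak to absorb an $O(1)$ approximation error, but your fallback (``keep $\mathbf 1_A$ exactly and note its conormal singularities along $\partial A$'') is a gesture, not an argument. The paper resolves this with two separate quantitative devices: (a) Lemma~\ref{l:rough}, which shows $\|(1-Op_h(\beta_\delta))^*\mathbf 1_A\|_{L^2(H)}=O(h^{1/2-\epsilon})$ by inserting an $h^{1-\epsilon}$-scale cutoff near $\partial A$ and integrating by parts away from it --- the gain $h^{1/2-\epsilon}$ beats the $h^{-1/4}$ loss; and (b) for the remaining term, pairing $\mathbf 1_A-f$ (with $f$ smooth, $\|\mathbf 1_A-f\|_{L^2(H)}\leq\ep$, $\supp f\subset U$) against $Op_h(\beta_\delta)\phi_h$, whose $L^2(H)$ norm is $O(1)$ precisely because $\beta_\delta$ is supported away from the glancing set $S^*H$; the smooth piece $\int_H f\,Op_h(\beta_\delta)\phi_h$ is then handled by the localized main theorem (Remark~\ref{localize}). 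Both steps are needed and neither is carried out in your outline.
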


\begin{remark} We note that as a corollary of Theorem \ref{local}, the results in Theorems \ref{cor1} and \ref{cor2} for QE eigenfunctions extend to all smooth curve segments $A.$ \end{remark}

\noindent {\sc Acknowledgements.} The authors would like to thank the anonymous referee for many helpful comments. J.G. is grateful to the National Science Foundation for support under the Mathematical Sciences Postdoctoral Research Fellowship  DMS-1502661.  The research of J.T. was partially supported by NSERC Discovery Grant \# OGP0170280 and an FRQNT Team Grant. J.T. was also supported by the French National Research Agency project Gerasic-ANR-
13-BS01-0007-0.


\section{Decomposition of defect measures} \label{cross-section}
\subsection{Invariant Measures near transverse submanifolds}
Let $N$ be a smooth manifold, $\mathcal{V}$ be a vector field on $N$ and write $\varphi^{\mathcal{V}}_t:N \to N$ for the flow map generated by $\mathcal V$ at time $t$.
Let $\Sigma\subset N$ be a smooth manifold transverse to $\mathcal{V}$. Then for $\epsilon>0$ small enough, the map $\iota:(-2\epsilon,2\epsilon)\times \Sigma \to N$
\[\iota(t,q)=\varphi^{\mathcal{V}}_t(q)\]
is a diffeomorphism onto its image and we may use $(-2\epsilon,2\epsilon)\times \Sigma$ as coordinates on $N$ near $\Sigma$. 
\begin{lemma}\label{l:invariance}
Suppose that $\mu$ is a finite Borel measure on $N$ and that $\mathcal{V}\mu=0$ i.e. $(\varphi_t^{\mathcal{V}})_*\mu=\mu$. Then, for a Borel set $A\subset [-\e,\e)\times \Sigma$,
$$
\iota^*\mu(A)=dt d\mu_\Sigma (A)
$$
where $d\mu _\Sigma$ is a finite Borel measure on $\Sigma$.
\label{l:invMeasure}
\end{lemma}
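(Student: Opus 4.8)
The plan is to transport the problem to the coordinates $(-2\epsilon,2\epsilon)\times\Sigma$ furnished by $\iota$, where the hypothesis turns into translation invariance in the $t$-variable, and then run an elementary Fubini-type argument. Set $\nu:=\iota^*\mu$, the pullback of $\mu$ under $\iota$; since $\iota$ is a diffeomorphism onto its image and $\mu$ is finite, $\nu$ is a well-defined finite Borel measure on $(-2\epsilon,2\epsilon)\times\Sigma$. Because $\iota(t,q)=\varphi^{\mathcal{V}}_t(q)$ we have $\varphi^{\mathcal{V}}_s\circ\iota(t,q)=\iota(t+s,q)$, so for any Borel $A\subset(-2\epsilon,2\epsilon)\times\Sigma$ whose $t$-translate $A+(s,0):=\{(t+s,q):(t,q)\in A\}$ still lies inside $(-2\epsilon,2\epsilon)\times\Sigma$, the invariance $(\varphi_s^{\mathcal{V}})_*\mu=\mu$ yields $\nu(A+(s,0))=\nu(A)$. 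Thus $\nu$ is invariant under exactly those $t$-translations that keep a set inside the chart.

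I would then construct $\mu_\Sigma$ by slicing. For a Borel set $B\subset\Sigma$ and an interval $I\subset(-2\epsilon,2\epsilon)$, put $m_B(I):=\nu(I\times B)$; by the translation invariance just noted, $m_B(I)$ depends only on the length $|I|$, say $m_B(I)=g_B(|I|)$ for $|I|\in(0,2\epsilon)$. Writing an interval as a disjoint union of two abutting subintervals and using finite additivity of $\nu$ gives $g_B(\ell_1+\ell_2)=g_B(\ell_1)+g_B(\ell_2)$ whenever $\ell_1+\ell_2<2\epsilon$, while $g_B$ is nondecreasing since $\nu\geq0$; hence $g_B(\ell)=c_B\,\ell$ for some constant $c_B\geq0$ (Cauchy's functional equation together with monotonicity). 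Define $\mu_\Sigma(B):=c_B$. Its countable additivity is inherited from that of $\nu$ applied to $(0,\epsilon)\times\bigsqcup_n B_n$, and $\mu_\Sigma(\Sigma)=\epsilon^{-1}\nu\big((0,\epsilon)\times\Sigma\big)\leq \epsilon^{-1}\mu(N)<\infty$, so $\mu_\Sigma$ is a finite Borel measure on $\Sigma$. By construction $\nu(I\times B)=|I|\,\mu_\Sigma(B)=\big(dt\,d\mu_\Sigma\big)(I\times B)$ for every interval $I\subset[-\epsilon,\epsilon)$ and every Borel $B\subset\Sigma$ (the endpoint case $|I|=2\epsilon$, and the irrelevance of whether $I$ is open or closed, both follow by continuity of measures from below).

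It remains to pass from rectangles to arbitrary Borel sets $A\subset[-\epsilon,\epsilon)\times\Sigma$. The rectangles $I\times B$ form a $\pi$-system generating the Borel $\sigma$-algebra of $[-\epsilon,\epsilon)\times\Sigma$ — here one uses that $[-\epsilon,\epsilon)$ and the manifold $\Sigma$ are second countable, so the product Borel $\sigma$-algebra coincides with the Borel $\sigma$-algebra of the product — and both $\nu$ and $dt\,d\mu_\Sigma$ restrict to finite measures on $[-\epsilon,\epsilon)\times\Sigma$ that agree on this $\pi$-system. By the uniqueness theorem for finite measures (Dynkin's $\pi$--$\lambda$ theorem) they agree on all Borel subsets of $[-\epsilon,\epsilon)\times\Sigma$, which is precisely the assertion $\iota^*\mu(A)=dt\,d\mu_\Sigma(A)$.

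The one point demanding care is the bookkeeping near the boundary of the chart: flow invariance of $\mu$ on $N$ only produces $t$-translation invariance of $\nu$ for translations keeping a set inside $\iota\big((-2\epsilon,2\epsilon)\times\Sigma\big)$, which is why the conclusion is stated for $A\subset[-\epsilon,\epsilon)\times\Sigma$ — every such set has $\epsilon$ worth of slack to be translated within $(-2\epsilon,2\epsilon)$, and this slack is exactly what makes $g_B$ additive, hence linear, on the full range $(0,2\epsilon)$. The remaining ingredients — the functional-equation argument and the uniqueness of measures — are routine.
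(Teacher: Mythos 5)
Your proof is correct, but it runs along a genuinely different track from the paper's. The paper works with the infinitesimal form of invariance: in the flow coordinates one has $\int \partial_t F\,d\mu=0$ for $F\in C_c^\infty$, and the key device is the decomposition $f(t,q)-\chi(t)\bar f(q)=\partial_t F$ with $F$ compactly supported (here $\bar f(q)=\int f(t,q)\,dt$ and $\int\chi\,dt=1$), which immediately gives $\int f\,d\mu=\iiint f(s,q)\,ds\,\chi(t)\,d\mu(t,q)$; indicators are then reached by monotone/dominated convergence and one concludes on rectangles. You instead use the integrated (group) form $(\varphi_s^{\mathcal V})_*\mu=\mu$ to get translation invariance of $\nu=\iota^*\mu$ on rectangles within the chart, derive linearity of $I\mapsto\nu(I\times B)$ in $|I|$ via Cauchy's functional equation plus monotonicity, and finish with Dynkin's $\pi$--$\lambda$ theorem. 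Both arguments are complete; the final step (rectangles generate the Borel $\sigma$-algebra of the product, using second countability) is common to both. The trade-off: the paper's route needs only the weak (Lie-derivative) form of invariance, which is what one actually extracts for defect measures from commutator identities, and it sidesteps any boundary bookkeeping because everything is done with compactly supported test functions in the open chart; your route is more elementary measure theory but leans on global flow invariance and on exactly the translation-slack issue you flag at the end --- which you handle correctly, and which is indeed the reason the conclusion is stated on $[-\e,\e)\times\Sigma$ inside the larger chart $(-2\e,2\e)\times\Sigma$. Your normalization $\mu_\Sigma(B)=\e^{-1}\nu((0,\e)\times B)$ agrees with the paper's $(2\e)^{-1}\mu([-\e,\e]\times B)$ by the linearity you establish, so the two constructions produce the same measure.
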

\begin{proof}
{As above, we choose coordinates $(t,q)$ so that $\iota^*\mathcal{V}=\partial_t$. Then, for all $F\in C_c^\infty (-2\epsilon,2\epsilon)\times \Sigma$, 
$$\int \partial_t Fd\mu=0.$$
Now, fix $\chi\in C_c^\infty((-2\epsilon,2\e))$ with with $\int\chi dt=1$. Let $f\in C_c^\infty((-2\epsilon,2\e)\times \Sigma)$ and define 
$$\bar{f}(q):=\int f(t,q)dt.$$
Then $f(t,q)-\chi(t)\bar{f}(q)=\partial_t F$ with 
$$F(t,q):=\int_{-\infty}^t f(s,q)-\chi(s)\bar{f}(q)ds\in C_c^\infty((-2\e,2\e)\times \Sigma).$$
Therefore, for all $f\in C_c^\infty((-2\e,2\e)\times N) $ and $\chi \in C_c^\infty((-2\e,2\e))$ with $\int \chi dt=1$,
$$\int f(t,q)d\mu(t,q)=\int \chi(t)\bar{f}(q)d\mu(t,q) =\iiint f(s,q)ds\chi(t)d\mu(t,q).$$
Now, let $B\subset \Sigma$ be Borel and $I\subset (-2\e,\e)$ Borel and $f_n(t,q)\uparrow 1_{I}(t)1_{B}(q)$. Then by the dominated convergence theorem,
$$\mu(I\times B)=\iint |I|1_{B}(q)\chi(t)d\mu(t,q).$$
Next, let $\chi_n\uparrow (2\e)^{-1}1_{[-\e,\e]}$ with $\int \chi_n \equiv 1$. Then we obtain
$$\mu(I\times B)=\frac{|I|}{2\e}\mu([-\e,\e]\times B).$$
So, letting $\mu_{\Sigma}(B):=(2\e)^{-1}\mu([-\e,\e]\times B)$, we have that for rectangles $I\times B$, $\mu(I\times B)=dtd\mu_\Sigma(I\times B)$. But then, since these sets generate the Borel sigma algebra, the proof of the lemma is complete.
}

\end{proof}

\subsection{Fermi coordinates}
{Throughout the remainder of the article we will work in the case that $H\subset M$ is a smooth, orientable, separating hypersurface. That is,  $M\setminus H$ has two connected components. We then recover Theorem~\ref{T: main} for general $H$ after proving Theorem~\ref{local} for such hypersurfaces. We then divide a given hypersurface into finitely many (possibly overlapping) subsets of separating orientable hypersurfaces and apply Theorem~\ref{local} to each.}
Let $H \subset M$ be a closed smooth hypersurface  and let $U_H$ be a Fermi collar neighborhood of $H$.  In Fermi coordinates \[ U_H  = \{(x',x_n): \, x' \in H \; \text{and}\;x_n \in (-c, c)\}\]
for some $c>0$,  and $H=\{(x',0) : x' \in H\}$. Since $H$ is a closed, {{separating}} hypersurface, it divides $M$ into two connected components $\Omega_H$ and $M \backslash \Omega_H$.  In the Fermi coordinates system, the point $(x',x_n)$ is identified with the point $\exp_{x'}(x_n \nu_n) \in U_H$ where $\nu_n$ is the unit normal vector to $\Omega_H$ with base point at  $x' \in H$. 

\begin{center}
\includegraphics[width=9cm]{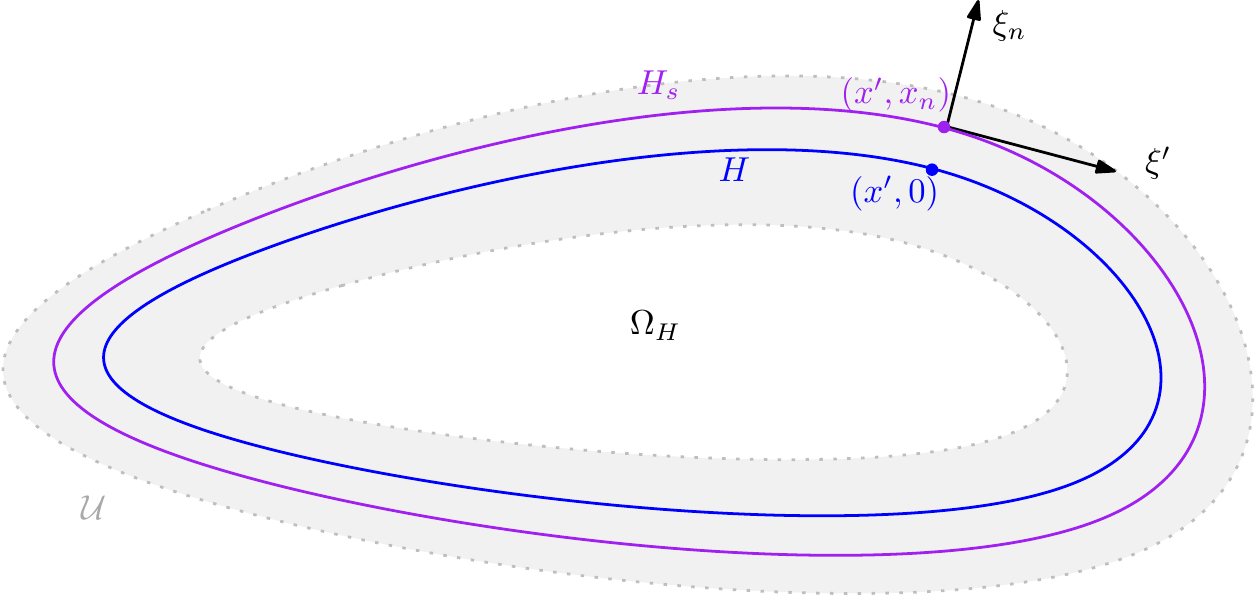}
\end{center}

The Fermi coordinates on $U_H$ induce coordinates $(x',x_n, \xi', {\xi_n})$ on $S_\U^*M=\{(x,\xi)\in S^*M: \; x \in \U \}$ with $(\xi',{\xi_n}) \in S^*_{(x',x_n)}M$.  In these coordinates, $\xi'$ is cotangent to $H$ while ${\xi_n}$ is conormal to $H$. 

Note that in the Fermi coordinate system we have 
\begin{equation}\label{E: R}
|(\xi', {\xi_n})|^2_{g(x', x_n)}= \xi_n^2 + R(x',x_n,\xi'),
\end{equation}
where $R$ satisfies that $R(x',0, \xi')= |\xi'|^2_{g_H(x')}$ for all $(x', \xi') \in T^*H$ and $g_H$ is the Riemannian metric induced on $H$ by $g$.

\subsection{Transversals for defect measures}
We now apply Lemma~\ref{l:invariance} to the special case of defect measures, using the fact that they are invariant under the geodesic flow.  In what follows we write $|\xi'|_{x'}:= |\xi'|_{g_H(x')}$, where $g_H$ is the Riemannian metric  on $H$ induced by $g.$
Let
$$\G_H(\delta):=\{(x,\xi)\in S^*_HM:\; |\xi'|_{x'}^2\geq 1-\delta^2\},$$
and define the set of non-glancing directions
$$\Sigma_\delta:= S^*_HM\setminus \G_H(\delta).$$ 
\begin{lemma}\label{l:defectInvariance1}
Suppose $\mu$ is a defect measure associated to a sequence of Laplace eigenfunctions. Then, for all $\delta>0$ there exists $\e>0$ small enough so that
\begin{gather*} 
\iota^*\mu =dt\, d\mu_{\Sigma_\delta}\quad \quad \text{on }\;(-\e,\e)\times \Sigma_\delta
\end{gather*}
where 
$$\iota:(-\e,\e)\times \Sigma_\delta \to  \bigcup_{|s|<\e}G^s(\Sigma_\delta), \qquad \qquad \iota(t,q)= G^t(q),$$
is a diffeomorphism and $d\mu_{\Sigma_\delta}$ is a finite Borel measure on $\Sigma_\delta$.
\end{lemma}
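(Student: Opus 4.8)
The plan is to derive the lemma from Lemma~\ref{l:invariance}, taking $N=S^*M$, letting $\mathcal{V}$ be the generator of the geodesic flow $G^t$, and using $\Sigma=\Sigma_\delta$ as the transversal. Two things must be checked: (i) that $\mu$ is invariant under $G^t$, so that Lemma~\ref{l:invariance} applies; and (ii) that $\Sigma_\delta$ is transverse to $\mathcal{V}$, uniformly enough that a single $\e>0$ makes $\iota(t,q)=G^t(q)$ a diffeomorphism onto its image on $(-2\e,2\e)\times\Sigma_\delta$.

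For (i) I would invoke the standard structural fact that a defect measure of a sequence of Laplace eigenfunctions is invariant under the geodesic flow. The one-line reason: with $p=|\xi|_g^2$ and any $a\in C_c^\infty(T^*M)$, self-adjointness of $-h^2\Delta_g$ together with $-h^2\Delta_g\phi_h=\phi_h$ gives $\inprod{[-h^2\Delta_g,\mathrm{Op}_h(a)]\phi_h}{\phi_h}=0$ for every $h$; multiplying by $\tfrac{i}{h}$ and using the symbol calculus, the left side equals $\inprod{\mathrm{Op}_h(\{p,a\})\phi_h}{\phi_h}+O(h)$, which converges along the defining subsequence to $\int\{p,a\}\,d\mu$. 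Hence $\int\{p,a\}\,d\mu=0$ for all $a$, i.e. $H_p\mu=0$; since $\supp\mu\subset S^*M$, where $H_p$ is a positive multiple of $\mathcal{V}$, this is exactly $(G^t)_*\mu=\mu$.

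For (ii) I would work in the Fermi coordinates $(x',x_n,\xi',\xi_n)$ of Section~\ref{cross-section}, in which $S^*_HM=\{x_n=0\}\cap S^*M$ and, by~\eqref{E: R}, $p=\xi_n^2+R(x',x_n,\xi')$. The $x_n$-component of $\mathcal{V}$ along $S^*_HM$ is a nonzero multiple of $\dell_{\xi_n}p=2\xi_n$, so $\mathcal{V}$ is transverse to $S^*_HM$ at $(x',0,\xi',\xi_n)$ exactly when $\xi_n\neq0$, i.e. when the covector is not tangent to $H$. On the closure $\overline{\Sigma_\delta}$ one has $|\xi'|_{x'}^2\leq 1-\delta^2$, hence $\xi_n^2=1-|\xi'|_{x'}^2\geq\delta^2$, so $\mathcal{V}$ is transverse to $S^*_HM$ along the compact set $\overline{\Sigma_\delta}$, quantitatively (with $|\dot{x}_n|\geq 2\delta$ there). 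A flow-box argument on a slightly larger precompact transversal --- say $\{|\xi'|_{x'}^2<1-\tfrac{\delta^2}{4}\}\cap S^*_HM$, whose closure is compact and still avoids the glancing set --- then produces a uniform $\e>0$ for which $\iota$ restricted to $(-2\e,2\e)\times\Sigma_\delta$ is a diffeomorphism onto its image. Applying Lemma~\ref{l:invariance} with these choices gives $\iota^*\mu=dt\,d\mu_{\Sigma_\delta}$ on $(-\e,\e)\times\Sigma_\delta$, where $\mu_{\Sigma_\delta}(B)=(2\e)^{-1}\mu(\iota([-\e,\e]\times B))$; this is finite since $\mu(S^*M)<\infty$.

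The only genuinely delicate point I expect is the uniformity in (ii): because $\Sigma_\delta$ is merely open in $S^*_HM$ one cannot feed it directly to the flow-box theorem, and the transversality of $\mathcal{V}$ to $S^*_HM$ degenerates as one approaches the glancing set $\{|\xi'|_{x'}=1\}$. Both difficulties are resolved precisely by the $\delta$-gap built into the definition of $\Sigma_\delta$, which keeps $\overline{\Sigma_\delta}$ compact and uniformly bounded away from glancing; everything else is a routine application of Lemma~\ref{l:invariance}.
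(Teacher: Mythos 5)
Your proposal is correct and follows essentially the same route as the paper: both apply Lemma~\ref{l:invariance} with $N=S^*M$, $\mathcal{V}$ the generator of the geodesic flow, and $\Sigma=\Sigma_\delta$, using the standard invariance of defect measures under $G^t$ and the quantitative transversality $|H_px_n|\geq c\delta$ on $\Sigma_\delta$ coming from $\xi_n^2=1-|\xi'|_{x'}^2\geq\delta^2$. The extra details you supply (the commutator argument for invariance and the compactness/flow-box argument for a uniform $\e$) are exactly the steps the paper leaves implicit or defers to its appendix.
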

\begin{proof}  In what follows we use Lemma \ref{l:invariance} with $N=S^*M$, $\mathcal V=H_p$ the Hamiltonian flow for $p=|\xi|_g$, and $\varphi_t^{\mathcal V}=G^t$ the geodesic flow.
Note that since $\mu$ is a defect measure for a sequence of Laplace eigenfunctions, it is invariant under the geodesic flow $G^t$.
Then, for  $q\in \Sigma_\delta$,
$$|H_px_n(q)|  >c\, \delta>0$$
 and hence $\Sigma_\delta$ is transverse to $G^t$. Therefore, there exists $\e>0$ so that 
$\iota:(-2\e,2\e)\times \Sigma_\delta \to S^*M,$ with $\iota(t,q)=G^t(q),$
is a coordinate map.
\end{proof}

\begin{remark}\label{l:pushforward}
For each $A\subset S^*_HM$ with $\overline{A}\subset S^*_HM\setminus S^*H$, there exists $\delta_0>0$ so that 
$$
 d\mu_{\Sigma_\delta}(A)=\lim_{t\to 0}\frac{1}{2t}\mu\Big(\bigcup_{|s|\leq t} G^s(A)\Big)
$$
for all $0<\delta \leq \delta_0$. Indeed, since $\overline A$ is compact, there exists $\delta_0=\delta_0(A)>0$ so that $\overline A \subset \Sigma_{\delta_0}$. Then, by Lemma \ref{l:defectInvariance1}, there exists $\ep=\ep(A)>0$ so that if $|t| \leq \ep$, then 
$$
 \mu\Big(\bigcup_{|s|\leq t} G^s(A)\Big)=  2t  \, d\mu_{\Sigma_\delta}(A). 
$$
In particular, we conclude that the quotient $\frac{1}{2t}\mu\Big(\bigcup_{|s|\leq t} G^s(A)\Big)$ is independent of $t$ as long as $|t|\leq \ep$.
\end{remark}

We also need the following description of $\mu$.
\begin{lemma}
\label{l:defectInvariance} 
Suppose $\mu$ is a defect measure associated to a sequence of Laplace eigenfunctions, and let $\delta>0$. Then, in the notation of Lemma \ref{l:defectInvariance1}, there exists $\e_0>0$ small enough so that 
$$
\mu =|\xi_n|^{-1} d\mu_{\Sigma_{\delta}}(x',\xi',\xi_n) dx_n,
$$
 for $(x',x_n,\xi',\xi_n)\in \iota((-\e_0,\e_0)\times \Sigma_\delta)$.
\end{lemma}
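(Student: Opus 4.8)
The plan is to transport the identity of Lemma~\ref{l:defectInvariance1} from the flow--out coordinates $(t,q)$ to the Fermi coordinates $(x',x_n,\xi',\xi_n)$; the only new ingredient is the Jacobian of the substitution that turns the time variable $t$ into the normal variable $x_n$ along each geodesic, and it is this Jacobian that produces the weight $|\xi_n|^{-1}$. Fix $\delta>0$ and let $\e>0$ be as in Lemma~\ref{l:defectInvariance1}, so that $\iota^*\mu=dt\,d\mu_{\Sigma_\delta}$ on $(-\e,\e)\times\Sigma_\delta$. For $q=(y',\eta',\eta_n)\in\Sigma_\delta$, equation~\eqref{E: R} at $x_n=0$ gives $\eta_n^2=1-|\eta'|_{y'}^2>\delta^2$. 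Since $p=|\xi|_g$, $|\xi|_g^2=\xi_n^2+R(x',x_n,\xi')$, and the flow preserves $S^*M=\{p=1\}$, one has
\[
\frac{d}{dt}\,x_n\bigl(G^t(q)\bigr)=H_px_n\bigl(G^t(q)\bigr)=\partial_{\xi_n}p\bigl(G^t(q)\bigr)=\frac{\xi_n}{|\xi|_g}\bigl(G^t(q)\bigr)=\xi_n\bigl(G^t(q)\bigr).
\]
By compactness of $S^*M$ and continuity of $G^t$ there is $0<\e_0\le\e$ such that for every $q\in\Sigma_\delta$ and $|t|<\e_0$ the point $G^t(q)$ stays inside the Fermi collar and $|\xi_n(G^t(q))|\ge\delta/2$. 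Hence for each fixed $q$ the map $t\mapsto x_n(G^t(q))$ is a strictly monotone diffeomorphism of $(-\e_0,\e_0)$ onto an open interval $J_q\ni0$, with smooth inverse $u\mapsto T_q(u)$ satisfying $T_q'(u)=\xi_n(G^{T_q(u)}(q))^{-1}$.

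I would then test against $f\in C_c(V)$, where $V:=\iota((-\e_0,\e_0)\times\Sigma_\delta)$. Write $G^{T_q(u)}(q)=(X'(u,q),u,\Xi'(u,q),\Xi_n(u,q))$, and note that $x_n\circ\Psi(u,q)=u$ for the map $\Psi(u,q):=G^{T_q(u)}(q)=\iota(T_q(u),q)$. Applying Lemma~\ref{l:defectInvariance1} together with the substitution $t=T_q(u)$ in the inner integral,
\[
\int_V f\,d\mu=\int_{\Sigma_\delta}\int_{-\e_0}^{\e_0}f(G^t(q))\,dt\,d\mu_{\Sigma_\delta}(q)=\int_{\Sigma_\delta}\int_{J_q}f\bigl(X'(u,q),u,\Xi'(u,q),\Xi_n(u,q)\bigr)\,\frac{du\,d\mu_{\Sigma_\delta}(q)}{|\Xi_n(u,q)|}.
\]
Since $\mu_{\Sigma_\delta}$ is finite, $f$ has compact support, and $|\Xi_n|^{-1}\le 2/\delta$ on the relevant region, Fubini's theorem lets me integrate in $u$ first. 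Now $\Psi$ is a diffeomorphism of $\{(u,q):q\in\Sigma_\delta,\ u\in J_q\}$ onto $V$ (it is $\iota$ composed with the diffeomorphism $(u,q)\mapsto(T_q(u),q)$), so for each fixed $u$ its restriction $\Phi_u:=\Psi(u,\cdot)\colon q\mapsto(X'(u,q),\Xi'(u,q),\Xi_n(u,q))$ is a diffeomorphism of the open set $\{q\in\Sigma_\delta:u\in J_q\}$ onto the open subset $\{x_n=u\}\cap V$ of the slice $\{x_n=u\}\cap S^*M$. Therefore $(\Phi_u)_*\mu_{\Sigma_\delta}$ is a finite Borel measure on that slice --- this is, by definition, the measure written $d\mu_{\Sigma_\delta}(x',\xi',\xi_n)$ on $\{x_n=u\}$ --- and the previous display becomes
\[
\int_V f\,d\mu=\int\!\!\int_{\{x_n=u\}\cap S^*M}f(x',u,\xi',\xi_n)\,|\xi_n|^{-1}\,d\mu_{\Sigma_\delta}(x',\xi',\xi_n)\,du,
\]
which is precisely $\mu=|\xi_n|^{-1}\,d\mu_{\Sigma_\delta}(x',\xi',\xi_n)\,dx_n$ on $\iota((-\e_0,\e_0)\times\Sigma_\delta)$.

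The Jacobian computation is immediate from $\dot x_n=\xi_n$, so the steps needing real care are: (i) the \emph{uniform} choice of $\e_0$ over the non-compact transversal $\Sigma_\delta$, which I handle via compactness of $S^*M$ together with the bound $|\xi_n|\ge\delta/2$ on the flow--out; and (ii) the bookkeeping that pins down the meaning of the right-hand side, i.e.\ making precise that the ``slice measure'' $d\mu_{\Sigma_\delta}(x',\xi',\xi_n)$ on $\{x_n=u\}$ stands for $(\Phi_u)_*\mu_{\Sigma_\delta}$, where $\Phi_u$ is the geodesic-to-$\{x_n=u\}$ map, and checking that $\Phi_u$ is a diffeomorphism onto an open subset of $\{x_n=u\}\cap S^*M$ --- which one gets for free from $\iota$ being a diffeomorphism.
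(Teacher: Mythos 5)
Your argument is correct and is essentially the paper's own proof: both start from Lemma~\ref{l:defectInvariance1}, compute $\partial_t x_n = H_p x_n = \xi_n/|\xi|_g = \xi_n$ on $S^*M$, use $|\xi_n|\geq \delta$ on $\Sigma_\delta$ to choose $\e_0$ uniformly, and convert $dt$ into $|\xi_n|^{-1}dx_n$. The only difference is that you spell out the Fubini/change-of-variables bookkeeping and pin down the slice measure as $(\Phi_u)_*\mu_{\Sigma_\delta}$, which the paper leaves implicit.
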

\begin{remark}
\label{remGeom}
Notice that $|H_px_n|>\gamma $ on $\Sigma_\gamma=S^*_HM\setminus\mathcal{G}(\gamma)$. Therefore, there exists $c_0,c_1>0$ so that 
$$
 \{(x',x_n, \xi', \xi_n):\;\; |x_n|\leq c_0\gamma,\; |\xi'|_{x'}^2\leq 1-c_0^{-1}\gamma^2\}\subset \bigcup_{|t|\leq c_1\gamma}G^t(\Sigma_\gamma).
 $$
\end{remark}
\begin{proof}
By Lemma~\ref{l:defectInvariance1},  
$$\iota^*\mu =d\mu_{\Sigma_{\delta}}(x',\xi',\xi_n) dt\quad\text{on }(-\e,\e)\times \Sigma_{\delta}.$$
Then, for  $q\in \Sigma_{\delta}$
$$|\partial_t x_n(\iota(0,q))|=|H_px_n(\iota(0,q))|=\frac{|\xi_n(\iota(0,q))|}{|\xi|_g}> \delta$$
and hence for $\e_0>0$ small enough and $q\in \Sigma_{\delta}$, $t\in (-\e_0,\e_0)$,
$$|\partial_t x_n(\iota(t,q))|=|H_px_n(\iota(t,q))|=\frac{|\xi_n(\iota(t,q))|}{|\xi|_g}> \frac{\delta}{2}.$$
Therefore, $dt= f(x',x_n,\xi',\xi_n)dx_n$ where 
$$f(x',x_n,\xi',\xi_n)=|H_px_n(\iota^{-1}(x_n,(x',\xi',\xi_n)))|^{-1}=\frac{|\xi|_g}{|\xi_n|}=|\xi_n|^{-1}$$ 
where in the last equality, we use that $|\xi|_g=1$. 
In particular, 
$$\mu=|\xi_n|^{-1}d\mu_{\Sigma_{\delta}}(x',\xi',\xi_n)dx_n.$$
\end{proof}


Before proceeding to the proof of Theorem \ref{T: main} we note that Lemma~\ref{l:defectInvariance} implies that for all $\delta>0$, 
\begin{equation}
\label{e:0H}
\mu( S^*_H M\setminus \G_H(\delta))=0.\qquad 
\end{equation}

\begin{remark}
Notice that the measure 
$$|\xi_n|^{-1}d\mu_{\Sigma_\delta}(x',\xi',\xi_n)=\frac{1}{\sqrt{1-|\xi'|_{x'}^2}}d\mu_{\Sigma_{\delta}}(x',\xi',\xi_n)$$ 
is hypersurface measure on $S^*_HM\setminus \mathcal{G}(\delta)$ induced by $\mu$ {where we take $\partial_{x_n}$ to be the normal vector field to $S^*_HM$}. For example, if $\mu_L$ is Liouville measure, then, parametrizing $S^*_HM\setminus \mathcal{G}(\delta)$ by $(x',\xi')$
$$d(\mu_L)_{\Sigma_\delta}=c \mathbf{1}_{\{S^*_HM\setminus \mathcal{G}(\delta)\}}(x',\xi',\xi_n) dx'd\xi'$$
for some $c>0$.
\end{remark}

\section{Proof of Theorem \ref{T: main}}
Consider the cut-off function $\chi_\alpha \in C^\infty(\R, [0,1])$ with 
$$\chi_\alpha(t) =\begin{cases} 
0 & |t| \geq \alpha \\
1 & |t| \leq \frac{\alpha}{2},
\end{cases}$$
with  $|\chi_\alpha'(t)| \leq 3/\alpha$ for all $t \in \R$.

 For $\delta>0$ consider the symbol
 \begin{equation}\label{E: beta}
 \beta_\delta(x',\xi')=\chi_\delta(|\xi'|_{x'}) \in S^0(T^*H)
 \end{equation}
 where we continue to write $|\xi'|_{x'}:= |\xi'|_{g_H(x')}.$  We refer the reader to the Appendix where the semiclassical notation used in this section is introduced.
 The operator $Op_h(\beta_\delta) \in \Psi^0(H)$ microlocalizes near the conormal direction in $T^*H$ which is identified with $\xi'=0$ via the orthogonal projection.  The first step towards the proof of Theorem \ref{T: main} is to reduce the problem to study averages over $H$ of the functions $\phi_h$ and $h\partial_{\nu}\phi_h$ when microlocalized near the conormal direction.
   
\begin{lemma}\label{L: microlocalize}  For any $\delta >0$ and  $ u \in L^2(H),$
\[\int_H u\, d\sigma_H=\int_H Op_h(\beta_\delta) u\, d\sigma_H+O_\delta(h^\infty){\|u\|_{L^2(H)}}.\]
\end{lemma}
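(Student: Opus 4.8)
The goal is to show that integrating $u$ over $H$ agrees, up to $O_\delta(h^\infty)\|u\|_{L^2(H)}$, with integrating the microlocalized function $Op_h(\beta_\delta)u$. The natural strategy is to interpret the integration functional $u \mapsto \int_H u\,d\sigma_H$ as testing $u$ against the constant function $\mathbf 1 \in C^\infty(H)$ in the $L^2(H)$ pairing, and then to move the operator $Op_h(\beta_\delta)$ onto the other factor via its formal adjoint. Concretely, write
\[
\int_H Op_h(\beta_\delta)u\,d\sigma_H = \inprod{Op_h(\beta_\delta)u}{\mathbf 1}_{L^2(H)} = \inprod{u}{Op_h(\beta_\delta)^*\mathbf 1}_{L^2(H)},
\]
so that the claim reduces to showing $Op_h(\beta_\delta)^*\mathbf 1 = \mathbf 1 + O_\delta(h^\infty)$ in $L^2(H)$ (or even in $C^\infty(H)$).

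First I would record that $Op_h(\beta_\delta)^* \in \Psi^0(H)$ is itself a semiclassical pseudodifferential operator whose full symbol has an asymptotic expansion $\beta_\delta(x',\xi') + h(\cdots) + \dots$ by the standard symbol calculus recalled in the Appendix. Then I would apply this operator to the constant function $\mathbf 1$. The key mechanism is that applying a pseudodifferential operator to $\mathbf 1$ — which has Fourier transform supported at $\xi'=0$ — only sees the symbol and its derivatives on the zero section $\{\xi'=0\}$. Since $\beta_\delta(x',\xi') = \chi_\delta(|\xi'|_{x'})$ equals $1$ identically for $|\xi'|_{x'}\le \delta/2$, every term in the symbol expansion of $Op_h(\beta_\delta)^*$ restricted to a neighborhood of $\xi'=0$ is either $\mathbf 1$ (leading term) or involves $\xi'$-derivatives of a locally constant function and hence vanishes there. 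More carefully, one writes $\mathbf 1 = Op_h(1)\mathbf 1$ and considers the difference $Op_h(\beta_\delta)^*\mathbf 1 - \mathbf 1 = Op_h(\beta_\delta - 1)^*\mathbf 1 + (\text{lower order correction terms})$; the symbol $\beta_\delta - 1$ is supported in $\{|\xi'|_{x'} \ge \delta/2\}$, which is disjoint from a fixed neighborhood of the zero section, so by the non-stationary phase / non-characteristic argument (the wavefront set of $\mathbf 1$ being the zero section, disjoint from $\mathrm{esssupp}\, (\beta_\delta-1)$) the result is $O_\delta(h^\infty)$ in every Sobolev norm, in particular in $L^2(H)$. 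One then controls $\|Op_h(\beta_\delta)^*\mathbf 1 - \mathbf 1\|_{L^2(H)}$ uniformly and applies Cauchy–Schwarz to get the stated bound with the factor $\|u\|_{L^2(H)}$.

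The main obstacle — really the only thing requiring care — is the bookkeeping for the operator $Op_h(\beta_\delta)$ being defined on the \emph{manifold} $H$ rather than on $\R^n$: one must either work in a partition of unity subordinate to coordinate charts, tracking that the quantization is well-defined mod $\Psi^{-\infty}$ independently of the charts, or invoke the invariant formulation of the symbol calculus. In either case the essential point persists: the adjoint of a quantization of a symbol that is $\equiv 1$ near the zero section, applied to the constant function whose semiclassical wavefront set is exactly the zero section, differs from the identity only by an operator that is microlocally trivial near $WF_h(\mathbf 1)$, hence contributes $O_\delta(h^\infty)$. An alternative that avoids the adjoint is to test against a general $\chi \in C^\infty(H)$ directly (as in the remark following Theorem \ref{T: main}) and observe $WF_h(\chi) \subset \{\xi'=0\}$, but the adjoint-on-the-constant formulation is the cleanest since only $\mathbf 1$ is needed here.
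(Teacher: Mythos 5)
Your proposal is correct and follows essentially the same route as the paper: pass to the adjoint, reduce to showing $(1-Op_h(\beta_\delta))^*\mathbf 1=O_\delta(h^\infty)$, and exploit that the symbol $1-\beta_\delta$ vanishes to infinite order near $\xi'=0$, which is exactly where the phase $\langle x-x',\xi'\rangle$ is stationary (equivalently, where $WF_h(\mathbf 1)$ lives). The paper simply carries out the non-stationary phase step explicitly, by repeated integration by parts against the operator $L=\frac{1}{|x-x'|^2+|\xi'|^2}\bigl(\sum_j \xi_j' hD_{x_j'}+\sum_j(x_j'-x_j)hD_{\xi_j'}\bigr)$ in local coordinates, rather than quoting the wavefront-set calculus.
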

\begin{proof}
We wish to show that 
$$\langle (1-Op_h(\beta_\delta)) u, 1 \rangle_{L^2(H)}=\langle  u,  (1-Op_h(\beta_\delta))^* 1 \rangle_{L^2(H)}=O_\delta(h^\infty).$$
 To prove this,  we simply note that {in local coordinates}
\[(1-Op_h(\beta_\delta) )^* 1 (x)= \frac{1}{(2\pi h)^{n-1}}  \iint e^{ \frac{i}{h}\langle x-x',\xi' \rangle} {a_\delta(x,\xi';h)}(1-\chi_{{2}\delta})(|\xi'|_{x}) \, d\xi' dx', \]
{for some symbol $a_\delta\in S^0$}.
{T}he phase function $\Phi(x', \xi';x) =\langle x-x',\xi' \rangle$ has critical points  in $(x', \xi')$ given by 
$$(x', \xi')=(x,0).$$

 By repeated integration by parts with respect to the operator  
\[L:=  \frac{1}{|x-x'|^2 + |\xi'|^2} \left( \sum_{j=1}^n \xi_j'  hD_{x'_j} +  \sum_{j=1}^n (x'_j-x_j) hD_{\xi'_j} \right),\] using that  $L (e^{i\Phi/h}) = e^{i \Phi/h},$ one gets
\[(1-Op_h(\beta_\delta) )^* 1 (x)= \frac{1}{(2\pi h)^{n-1}}  \iint e^{i \frac{(x-x')\xi'}{h}}{a_\delta(x,\xi')} (1-\chi_\delta)(|\xi'|_{x}) \chi_1(|x-x'|) \,d\xi' dx' + O_\delta(h^\infty) \]
$$ = O_{\delta}(h^{\infty}),$$
uniformly in $x \in H.$ The last line follows by repeated integrations by parts with respect to $L $ using the fact that $(1-\chi_\delta)^{(k)}(0)=0$ for all $k  \geq 0.$ 
\end{proof}

\subsection{Proof of Theorem \ref{T: main}}
We wish to show that for any $\ep>0$ there exists $h_0(\ep)>0$ so that 
\begin{equation}\label{E: to prove 0}
 \left|\int_H \phi_h \, d\sigma_H\right| \leq  \ep \qquad \text{and}\qquad \left| \int_H  h \partial_\nu \phi_h d\sigma_H \right| \leq \ep, 
\end{equation}
for all  $h \leq h_0$. 

In view of  Lemma \ref{L: microlocalize}, we can microlocalize the problem to the conormal direction; that is, the claim in \eqref{E: to prove 0}  follows provided we prove that given $\ep>0$ there exist $\delta(\ep)>0$ and $h_0(\ep)>0$ so that 

\begin{equation}\label{E: to prove 1}
 \left|\int_H  Op_h(\beta_{\delta}) \phi_h \, d\sigma_H\right| \leq  \ep \qquad \text{and}\qquad \left| \int_H  Op_{h}(\beta_{\delta}) h \partial_\nu \phi_h d\sigma_H \right| \leq \ep, 
\end{equation}
for all  $h \leq h_0(\ep)$.

To prove (\ref{E: to prove 1}), by Cauchy-Schwarz, it clearly suffices to establish the stronger bounds

\begin{equation}\label{E: to prove}
  \|Op_h(\beta_\delta) \phi_h\|_{L^2(H)} \leq \ep \qquad \text{and}\qquad \|Op_h(\beta_\delta)  h \partial_\nu \phi_h\|_{L^2(H)} \leq  \ep, 
\end{equation}\\
for all  $h \leq h_0(\ep)$ and $\delta(\ep) >0$ sufficiently small.

From now on, we fix $\ep>0$.  Using Green's formula \cite{CTZ}, it is straightforward to check that for any operator $A:C^\infty(M)\to C^\infty(M)$  one has the Rellich Identity
\begin{equation}\label{E: Green}
\frac{i}{h} \int_{\Omega_H} [-h^2 \Delta_g, A] \phi_h \, \overline{\phi_h} \, dv_g= \int_H A \phi_h\, \overline{hD_\nu \phi_h} \, d\sigma_H + \int_H h D_\nu (A \phi_h) \, \overline{\phi_h} \, d\sigma_H,
\end{equation}
where $D_\nu = \frac{1}{i} \partial_\nu$, with $\nu$ being the unit outward vector normal to $\Omega_H$.

Let $\delta>0$ and $\alpha>0$ be two real valued parameters to be specified later and consider the operator
\[A_{\delta, \alpha}(h):=Op_h(\beta_\delta^2) \circ Op_h(\chi_\alpha(x_n)) \circ hD_\nu,\]
where $\beta_\delta$ is defined in \eqref{E: beta}.
{\begin{remark}
We note that when we write $Op_h (\beta_\delta^2)$ above, we are actually considering the operator $Op_h(\beta_\delta^2)\otimes \Id_{x_n}$. That is, for $u\in C^\infty(M)$,
$$[Op_h(\beta_\delta^2)u](x',y_n)=[Op_h(\beta_\delta^2)u|_{x_n=y_n}](x').$$
\end{remark}
}
 The operator $A_{\delta, \alpha}(h)$ is the semiclassical normal derivative operator  $h$-microlocalized to a neighbourhood of the conormal direction to $H$ over the collar neighbourhood $U_H.$

We note that 
 \begin{equation}\label{E: RHS 1}
  \int_H A_{\delta, \alpha}(h) \phi_h\, \overline{hD_\nu \phi_h} \, d\sigma_H= \left \langle Op_h(\beta_\delta^2)  hD_\nu \phi_h\,,\, {hD_\nu \phi_h} \right\rangle_{L^2(H)},
  \end{equation}
  since $\chi_\alpha(x_n)=1$ for $x_n \in [-\frac{\alpha}{2},\frac{\alpha}{2}]$.
 Without loss of generality, we may assume that 
\[\Omega_H \cap U_H =\{(x', x_n): \, x'\in H \;\text{and}\; x_n<0\}.\]
With this choice, $D_\nu=D_{x_n}$.
We next recall that  
 $$ \gamma_H (h^2 D_{\nu}^2 \phi_h) = (I + h^2 \Delta_{g_H}) \gamma_H( \phi_h) + h a_1 \gamma_H ( \phi_h) + h a_2 \gamma_H  (h D_{\nu} \phi_h) ,$$ 
 where $\gamma_H:M \to H$ is the restriction map to $H$, and $a_1, a_2 \in C^{\infty}(H)$. 
 Since $\chi_\alpha'(0)=0$ it follows from the restriction upper bounds $\|\phi_h\|_H=O(h^{-1/4})$ \cite{BGT,HTacy,T,Tat} and $\| h D_\nu \phi_h \|_H = O(1)$ \cite{CHT,T14} 
that
$$ \langle (hD_\nu)^2 \phi_h,\phi_h \rangle_{L^2(H)} - \langle (1+h^2\Delta_{g_H})\phi_h, \phi_h \rangle_{L^2(H)}  = O_{L^2}(\sqrt{h}).$$

Consequently,
 \begin{align}
  \int_H h D_\nu (A_{\delta, \alpha}(h) \phi_h) \, \overline{\phi_h} \, d\sigma_H
  &=\left \langle h D_\nu  Op_h(\beta_\delta^2)\chi_{\alpha}(x_n)  h D_\nu  \phi_h \,,\, { \phi_h} \right\rangle_{L^2(H)}  \notag\\
    &=\left \langle  Op_h(\beta_\delta^2) ( h D_\nu)^2  \phi_h \,,\, { \phi_h} \right\rangle_{L^2(H)}  \notag\\
  &= \left \langle Op_h(\beta_\delta^2)  (1+h^2\Delta_{g_H}) \phi_h \,,\, { \phi_h} \right\rangle_{L^2(H)} + O(h^{\frac{1}{2}}).\label{E: RHS 2}
  \end{align}

Substitution of  \eqref{E: RHS 1} and \eqref{E: RHS 2} in \eqref{E: Green} gives
\begin{align}\label{E: Green 2}
&\frac{i}{h} \int_{\Omega_H} [-h^2 \Delta_g, A_{\delta, \alpha}(h)] \phi_h \, \overline{\phi_h} \, dv_g= \notag \\
&\qquad = \left \langle Op_h(\beta_\delta^2)  hD_\nu \phi_h\,,\, {hD_\nu \phi_h} \right\rangle_{H}+  \left \langle Op_h(\beta_\delta^2)  (1+h^2\Delta_{g_H}) \phi_h \,,\, { \phi_h} \right\rangle_{H} + O(h^{\frac{1}{2}}).
\end{align}

Next, we observe that 
\begin{equation}\label{E: E1}
\|Op_h(\beta_\delta)  hD_\nu \phi_h\|^2_H= \left \langle Op_h(\beta_\delta^2)  hD_\nu \phi_h\,,\, {hD_\nu \phi_h} \right\rangle_{H}+O(h)
\end{equation}
 since $\|hD_\nu \phi_h\|_H=O(1)$  \cite{CHT}.
On the other hand, for $(x',\xi') \in {\supp} \, \beta_\delta$ we have $|\xi'|^2_{x} \leq \delta^2$ and so,
\[\beta_\delta^2 \cdot (1- |\xi'|^2_{x}) - \beta_\delta^2 \cdot (1-2\delta^2)= \beta_\delta^2 \left(2 \delta^2 - |\xi'|^2_{x}\right)  \geq \beta_\delta^2 \delta^2\geq 0.\]
 Therefore, combining the sharp Garding inequality with the bound  $\|\phi_h\|_H=O(h^{-1/4})$ gives
\begin{align}
(1-2\delta^2) \|Op_h(\beta_\delta)  \phi_h\|_{H}^2
&= \langle  Op_h(\beta_\delta^2(1-2\delta^2) ) \phi_h, \phi_h \rangle_H + O(h^{\frac{1}{2}}) \notag\\
&\leq  \langle  Op_h(\beta_\delta^2 \cdot (1-|\xi'|^2_{x}) ) \phi_h, \phi_h \rangle_H + O(h^{\frac{1}{2}}) \notag\\
&=  \langle  Op_h(\beta_\delta^2) (1+h^2\Delta_{g_H} ) \phi_h, \phi_h \rangle_H + O(h^{\frac{1}{2}}). \label{E: E2}
\end{align} 
Substitution of  \eqref{E: E1} and \eqref{E: E2} into  \eqref{E: Green 2} gives

\begin{multline}\label{E: Green 3}
\|Op_h(\beta_\delta)  hD_\nu \phi_h\|_H^2 +(1-2\delta^2) \|Op_h(\beta_\delta)  \phi_h\|_{H}^2  \leq \\\frac{i}{h} \int_{\Omega_H} [-h^2 \Delta_g, A_{\delta, \alpha}(h)] \phi_h \, \overline{\phi_h} \, dv_g + O(h^{\frac{1}{2}}).
\end{multline}

The claim in \eqref{E: to prove} follows at  once from \eqref{E: Green 3} provided we show that for any $\ep >0$ there exist $\delta, \alpha>0$ and $h_0>0$  (all possibly depending on $\ep$) such that 
\begin{equation} \label{E: goal}
\Big|\Big \langle \frac{i}{h} [-h^2 \Delta_g, A_{\delta, \alpha}(h)] \phi_h \,,\, {\phi_h} \Big \rangle_{L^2(\Omega_H)} \Big| \leq \ep^2 \qquad \forall h \leq h_0(\ep)\end{equation}

To prove \eqref{E: goal} we note that 
\begin{multline} \label{E: goal 2}
\Big \langle \frac{i}{h} [-h^2 \Delta_g, A_{\delta, \alpha}(h)] \phi_h \,,\, {\phi_h} \Big \rangle_{L^2(\Omega_H)}=
\\=\Big \langle Op_h\Big( \big\{\sigma(-h^2\Delta_g), \sigma(A_{\delta, \alpha}(h))\big\} \Big) \phi_h \,,\, {\phi_h} \Big \rangle_{L^2(\Omega_H)}+O(h),
\end{multline}
where $\sigma(A_{\delta, \alpha}(h))(x,\xi)= \beta_\delta^2(x',\xi') \,\chi_\alpha(x_n) \xi_n $, and  according to \eqref{E: R},  the Poisson bracket 
\begin{equation}\label{E: poisson}
 \big\{|(\xi', {\xi_n})|^2_{x}\,,\, \sigma(A_{\delta, \alpha}(h))\big\} = 2\chi_\alpha'(x_n) \beta_\delta^2(x',\xi') \, \xi_n^2 +  \chi_\alpha(x_n)q_\delta(x',x_n,\xi', {\xi_n}) 
 \end{equation}
where,
 $$q_\delta(x,\xi):=  \xi_n\partial_{\xi'} R \cdot \partial_{x'} \beta_\delta^2- \xi_n\partial_{x'} R \cdot\partial_{\xi'} \beta_\delta^2    -\partial_{x_n} R \cdot \beta_\delta^2 .$$\\
  We now estimate each term in the RHS of \eqref{E: poisson} separately.

\begin{lemma}\label{L: lemma}
Let $\{ \phi_h \}$ be an $L^2$-normalized eigenfunction sequence with defect measure $\mu.$ Then,  \\

\noindent (i)\;\; $  | \big\langle Op_h( \chi_\alpha(x_n) q_\delta ) \phi_h \,,\, {\phi_h} \big \rangle_{L^2(\Omega_H)} |  \leq R_{\alpha,\delta}+o(1),$\\
where 
\[R_{\alpha,\delta} := \| q_{\delta} \|_{L^{\infty}} \cdot \mu(\{(x',x_n,\xi', {\xi_n}) \in S_{U_H}^*M:\, |x_n|\leq \alpha, |\xi'| < \delta \})^{\, \frac{1}{2}}. \]
In addition,
\begin{equation*}\label{E: claim 1}
(ii) \;\; \big\langle Op_h( 2\chi_\alpha'(x_n) \beta_\delta^2(x',\xi') \, \xi_n^2  ) \phi_h \,,\, {\phi_h} \big \rangle_{L^2(\Omega_H)}=\int_{S^*_{\Omega_H} M} 2\chi_\alpha'(x_n) \beta_\delta^2(x',\xi') \, \xi_n^2\, d\mu  +o(1).
\end{equation*} 
In both (i) and (ii),  $o(1)$ denotes a term that vanishes as $h \to 0^+.$
\end{lemma}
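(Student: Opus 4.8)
The plan is to prove both (i) and (ii) by a standard microlocal argument: express each matrix element as an integral against the defect measure plus an $o(1)$ error, and then bound the resulting $\mu$-integral using the support properties of the symbols. The key input is the definition of the defect measure $\mu$, namely that for $a \in C_c^\infty(T^*M)$ one has $\langle Op_h(a)\phi_h, \phi_h\rangle_{L^2(M)} \to \int a\, d\mu$, together with the standard facts that $Op_h(a)^*Op_h(a) = Op_h(|a|^2) + O_{L^2\to L^2}(h)$ and that products of quantizations compose to leading order. I would begin by noting that both symbols in question are supported in the Fermi collar $U_H$ (because of the $\chi_\alpha(x_n)$ factor), so the integrals over $\Omega_H$ can be harmlessly replaced by integrals over $\Omega_H \cap U_H$, and the matrix elements make sense in terms of a compactly supported symbol after multiplying by a cutoff equal to $1$ on the support.

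For part (ii), the symbol $2\chi_\alpha'(x_n)\beta_\delta^2(x',\xi')\xi_n^2$ is smooth and compactly supported (in $x$; it is a classical symbol in $\xi$ and can be truncated away from $\xi_n = 0$ without harm since on $S^*M$ we have $|\xi|_g = 1$), so the defect measure convergence applies directly and gives the displayed identity with an $o(1)$ error — but here one must be slightly careful that $\phi_h$ is being tested against an operator on $M$ while the inner product is over $\Omega_H$; I would handle this by writing $\langle Op_h(a)\phi_h,\phi_h\rangle_{L^2(\Omega_H)} = \langle Op_h(a 1_{\Omega_H})\phi_h,\phi_h\rangle + o(1)$ using that $a$ vanishes to infinite order (in fact identically for $x_n$ slightly positive) near $H$ together with the factor $\chi_\alpha'(x_n)$ which is supported away from $x_n = 0$, so no boundary contribution arises and the measure $\mu$ (which lives on $S^*M$) restricted to $S^*_{\Omega_H}M$ is what appears. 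For part (i), the symbol $\chi_\alpha(x_n)q_\delta(x,\xi)$ is likewise smooth and compactly supported in $x$, so $\langle Op_h(\chi_\alpha(x_n)q_\delta)\phi_h,\phi_h\rangle_{L^2(\Omega_H)} \to \int_{S^*_{\Omega_H}M}\chi_\alpha(x_n)q_\delta\, d\mu$, and I would then bound this integral in absolute value by $\|q_\delta\|_{L^\infty}\,\mu(\supp(\chi_\alpha(x_n)\beta_\delta) \cap S^*M)$; since $q_\delta$ is supported where $|\xi'|<\delta$ and $|x_n|\le \alpha$ (inheriting this from the $\beta_\delta^2$ and $\partial\beta_\delta^2$ factors and the $\chi_\alpha(x_n)$ factor), this is at most $\|q_\delta\|_{L^\infty}\,\mu(\{|x_n|\le\alpha,\ |\xi'|<\delta\}\cap S^*_{U_H}M)$. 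To get the stated square-root exponent $R_{\alpha,\delta}$, I would instead apply Cauchy–Schwarz: write $Op_h(\chi_\alpha q_\delta) = Op_h(q_\delta)Op_h(\chi_\alpha \psi) + O(h)$ for a cutoff $\psi$ equal to $1$ on the relevant $\xi'$-support, estimate $\|Op_h(q_\delta)\|_{L^2\to L^2} \le \|q_\delta\|_{L^\infty} + O(h^{1/2})$ by Gårding, and bound $\|Op_h(\chi_\alpha\psi)\phi_h\|_{L^2}^2 = \langle Op_h(|\chi_\alpha\psi|^2)\phi_h,\phi_h\rangle + O(h) \to \mu(\supp) \le \mu(\{|x_n|\le\alpha,|\xi'|<\delta\}\cap S^*_{U_H}M)$, which yields exactly $R_{\alpha,\delta} + o(1)$.

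The main obstacle I anticipate is bookkeeping around the fact that the inner products are taken over $\Omega_H$, a manifold with boundary $H$, rather than over all of $M$: the defect-measure convergence statement is usually phrased for operators on the closed manifold $M$, so I must justify replacing $\langle Op_h(a)\phi_h,\phi_h\rangle_{L^2(\Omega_H)}$ by an honest matrix element on $M$ with controlled error. The cleanest route is to exploit that in part (ii) the symbol carries a factor $\chi_\alpha'(x_n)$ supported in $\alpha/2 \le |x_n| \le \alpha$, hence bounded away from $H$, so the symbol $2\chi_\alpha'(x_n)\beta_\delta^2\xi_n^2 \cdot 1_{\Omega_H}$ is genuinely smooth on $M$ and one may apply the definition of $\mu$ verbatim; for part (i) one does not even need this since only an upper bound is required, and one can dominate $|\langle Op_h(\chi_\alpha q_\delta)\phi_h,\phi_h\rangle_{L^2(\Omega_H)}|$ by the corresponding quantity with $\phi_h$ extended by $1_{\Omega_H}$-truncation and absorb the boundary layer, which is $O(\alpha^{1/2})$-small in a way compatible with the $o(1)$ claim, or more simply extend $\chi_\alpha(x_n)$ to a symbol supported in $U_H$ and cut off by $1_{\Omega_H}$ — the discontinuity along $H$ is integrable against $\mu$ by Lemma~\ref{l:defectInvariance} (which shows $\mu$ has no atoms on $S^*_HM$ in the relevant region) and contributes $o(1)$. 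With these reductions in place, the rest is a routine application of the symbol calculus, Gårding's inequality, and the definition of the defect measure.
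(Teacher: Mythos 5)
Your proposal is correct and follows essentially the same route as the paper: part (ii) rests on exactly the paper's observation that $1_{\Omega_H}\cdot\chi_\alpha'(x_n)$ extends to a smooth function because $\chi_\alpha'$ vanishes near $x_n=0$ (the paper's cutoff $\rho$), followed by microlocalization to $S^*M$ and the definition of the defect measure, and part (i) is Cauchy--Schwarz plus the defect measure. The only cosmetic difference is in (i): the paper bounds $|\langle Op_h(\chi_\alpha q_\delta)\phi_h,\phi_h\rangle_{L^2(\Omega_H)}|^2$ by $\|Op_h(\chi_\alpha q_\delta)\phi_h\|^2_{L^2(M)}=\langle Op_h(\tilde\chi\,\chi_\alpha^2|q_\delta|^2)\phi_h,\phi_h\rangle+O(h)$ and extracts $\|q_\delta\|_{L^\infty}^2$ from the resulting $\mu$-integral, whereas you factor the operator as $Op_h(q_\delta)Op_h(\chi_\alpha\psi)+O(h)$ and use an $L^\infty$ operator-norm bound on the first factor --- both give $R_{\alpha,\delta}+o(1)$.
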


We postpone the proof of Lemma \ref{L: lemma} until the end of this section. Assuming this result for the moment, we now conclude the proof of the theorem. 
From  Lemma \ref{L: lemma}  and \eqref{E: poisson}, it follows that
\begin{align} \label{E: goal 4}
\Big \langle \frac{i}{h} [-h^2 \Delta_g, A_{\delta, \alpha}(h)] \phi_h \,,\, {\phi_h} \Big \rangle_{L^2(\Omega_H)}
=\int_{S^*_{\Omega_H}M} 2\chi_\alpha'(x_n) \beta_\delta^2(x',\xi') \, {\xi_n}^2\, d\mu+  R_{\alpha,\delta} + o(1).
\end{align}
Since $\mu$ is a Radon measure, and hence monotone,
 \begin{equation} \label{ir}
\lim_{\alpha \to 0} R_{\alpha,\delta} =  \| q_{\delta} \|_{L^{\infty}} \cdot \mu( \{ (x',0,\xi) \in S_H^*M; \, |\xi'| < \delta \} )^{\frac{1}{2}}. \end{equation}
Thus, using Lemma~\ref{l:defectInvariance} (or more precisely~\eqref{e:0H}) gives
 \begin{equation} \label{ir2}
\lim_{\alpha \to 0} R_{\alpha,\delta} =0.
 \end{equation}

Moreover, since the LHS of (\ref{E: Green 3}) is independent of $\alpha$, we are free to take the $\alpha \to 0$ limit of both sides. In view of (\ref{E: goal 4}) and (\ref{ir}), it follows that after taking $h \to 0^+$ and then $\alpha \to 0^+,$

\begin{align}\label{E: Green 4}
&\limsup_{h \to 0} \Big( \|Op_h(\beta_\delta)  hD_\nu \phi_h\|_H^2 +(1-2\delta^2) \|Op_h(\beta_\delta)  \phi_h\|_{H}^2 \Big)\leq   \nonumber \\
&\hspace{4cm}\leq   \limsup_{\alpha \to 0} \limsup_{h \to 0^+} \frac{i}{h} \int_{\Omega_H} [-h^2 \Delta_g, A_{\delta, \alpha}(h)] \phi_h \, \overline{\phi_h} \, dv_g \nonumber \\
&\hspace{4cm}= \limsup_{\alpha \to 0^+} \int_{S^*_{\Omega_H}M} 2\chi_\alpha'(x_n) \beta_\delta^2(x',\xi') \, \xi_n^2\, d\mu. \quad 
\end{align}\\
The last line in (\ref{E: Green 4}) follows from (\ref{ir2}). 

To analyze the RHS of \eqref{E: Green 4},  fix  $\gamma>0$ small. By Lemma \ref{l:defectInvariance}  there exists $\ep_\gamma>0$ and a measure $\mu_{\Sigma_\gamma}$ on $\Sigma_\gamma=\{(x,\xi)\in S^*_HM:\; |\xi'|_{x'}^2\leq 1-\gamma^2\}$ so that 
\[
\mu(x,\xi) =f(x',x_n,\xi',\xi_n) d\mu_{\Sigma_{\gamma}}(x',\xi',\xi_n) dx_n, \hspace{1cm}(x,\xi)\in \bigcup_{|t|\leq \ep_\gamma} G^t(\Sigma_\gamma).
\]
By Remark~\ref{remGeom} we may assume that we work with  $\alpha, \delta$  small enough so that 
\begin{equation}
\label{e:supp}
{\supp}(\chi_\alpha' \cdot \beta_\delta^2 ) \subset \bigcup_{|t|\leq \ep_\gamma} G^t(\Sigma_\gamma).
\end{equation}

Since  supp$(\chi_\alpha' ) \subset (-\alpha, 0),$ by the Fubini theorem we have
\begin{equation}\label{E: to bound}
\begin{aligned}
&\int_{S^*_{\Omega_H}M} 2\chi_\alpha'(x_n) \beta_\delta^2(x',\xi')  \xi_n^2\, d\mu=\\
&\qquad =\int_{-c}^0  2\chi_\alpha'(x_n)  \left( \int_{S_{H}^*\!M}\beta_\delta^2(x',\xi')  \xi_n^2|\xi_n|^{-1} d\mu_{\Sigma_{\gamma}} (x', \xi',{\xi_n}) \right) dx_n\\
&\qquad =\int_{S_{H}^*\!M}\int_{-c}^0  2\chi_\alpha'(x_n)  \beta_\delta^2(x',\xi')  |\xi_n|  dx_nd\mu_{\Sigma_{\gamma}} (x', \xi',{\xi_n}).
\end{aligned}
\end{equation}
Sending $\alpha\to 0$ gives 
$$
\lim_{\alpha\to 0^+}\int_{S^*_{\Omega_H}M} 2\chi_\alpha'(x_n) \beta_\delta^2(x',\xi')  \xi_n^2\, d\mu=\int_{S_{H}^*\!M} 2  \beta_\delta^2(x',\xi') |\xi_n| d\mu_{\Sigma_{\gamma}} (x', \xi',{\xi_n}).
$$
Sending $\delta\to 0$ and using that $\beta_\delta\equiv 1$ on $N^*H$, $|\beta_\delta|\leq C$ we obtain
\begin{multline}
\label{E: bound on int}
\lim_{\delta\to 0}\lim_{\alpha\to 0^+}\int_{S^*_{\Omega_H}M} 2\chi_\alpha'(x_n) \beta_\delta^2(x',\xi')  \xi_n^2\, d\mu=\int_{N^*H} 2 \, d\mu_{\Sigma_{\gamma}}(x', \xi',{\xi_n})=2\, \mu_{\Sigma_{\gamma}}(N^*H).
\end{multline}
Since $\mu$ is conormally diffuse, we have by Remark~\ref{l:pushforward} that $\mu_{\Sigma_{\gamma}}(N^*H) =0$ and so (\ref{E: to prove}) follows from \eqref{E: bound on int} and \eqref{E: Green 4}. \qed

\subsection{Proof of Lemma \ref{L: lemma}}

\begin{proof}
First, we use the standard fact that $\{\phi_h\}$ are microsupported on $S^*M$ \cite{CHT} to $h$-microlocally cut them off near $S^*M$. 
More precisely, for  $r>0$ small, consider the annular shell $$A(r):= \left\{(x,\xi) \in T^*M: \, 1-r < |\xi|_{g(x)}< 1+r \right\}.$$  Let  $\tilde \chi \in C_c^\infty(T^*M)$ be a cutoff function equal to  $1$ on $A(r)$ and zero on $T^*M \setminus A(2r)$. Then, \cite{CHT}
\begin{equation}\label{E: chi tilde}
\| \phi_h-Op_h(\tilde \chi) \phi_h \|_{L^2(M)}=O(h^\infty).
\end{equation}

\noindent \emph{Proof of (i):} 
Since $\|\phi_h\|_{L^2(M)}=1$, by Cauchy-Schwarz,
\begin{align*}
\big|\big\langle Op_h( \chi_\alpha(x_n) q_\delta ) &\phi_h \,,\, {\phi_h} \big \rangle_{L^2(\Omega_H)}\big|^2 \leq  \| Op_h( \chi_\alpha(x_n) q_\delta ) \phi_h \|^2_{L^2(M)}\\
&=\big \langle [ Op_h( \chi_\alpha(x_n) q_\delta )]^* [ Op_h( \chi_\alpha(x_n) q_\delta )] \phi_h, \phi_h \big \rangle_{L^2(M)} \\
&=\big\langle [ Op_h( \chi_\alpha(x_n) q_\delta )]^* [ Op_h( \chi_\alpha(x_n) q_\delta )] \phi_h,  Op_h(\tilde \chi) \phi_h \big\rangle_{L^2(M)}  + O(h^\infty)\\
&=\big\langle Op_h( \tilde \chi \cdot \chi_\alpha^2(x_n) \cdot |q_\delta|^2 ) \phi_h, \phi_h \rangle_{L^2(M)}  + O(h)\\
&=\int_{S^*  M} \tilde \chi \cdot \chi_\alpha^2(x_n) \cdot |q_\delta|^2 \, d\mu    + o(1)\\
&\leq \| q_{\delta} \|^2_{L^\infty}  \cdot \mu \big(\{(x',x_n,\xi', {\xi_n}) \in S_{U_H}^*M:\, |x_n|\leq \alpha, |\xi'| < \delta \} \big) +o(1),
\end{align*}
where the penultimate identity follows from the fact that $\mu$ is the defect measure associated to $\{\phi_h\}$ and  the symbol $  \tilde \chi \cdot \chi_\alpha^2(x_n) \cdot |q_\delta|^2  \in C_c^\infty (T^* U_H)$. \ \\

\noindent \emph{Proof of (ii):}
Let $\rho \in C_c^\infty(\R)$ be a smooth cut-off function with $\rho(x_n)=0$ for $x_n\geq 0$ and $\rho(x_n)=1$ for $x_n \leq -\alpha/2$. Then,  since $\Omega_H \cap U_H$ is identified with the set of points on which $x_n<0$, and ${\supp} (\chi_\alpha')  \subset (-\infty, -\alpha/2] \cup [\alpha/2, +\infty)$, we have
$$\rho(x_n) \chi_\alpha'(x_n)= \begin{cases} 0 & \text{on}\; \Omega_H^c,\\ \chi_\alpha'(x_n) & \text{on}\; \Omega_H. \end{cases}$$

Note that since $\chi_\alpha'(x_n)=0$ for $x_n \in [-\alpha/2, \alpha/2]$,  we may regard $\rho\chi_\alpha'$ as a smooth function defined on all of $M$. We then have that
\begin{multline*} 
\big\langle Op_h( 2\chi_\alpha'(x_n) \beta_\delta^2(x',\xi') \, \xi_n^2  ) \phi_h \,,\, {\phi_h} \big \rangle_{L^2(\Omega_H)}=\\
=\big\langle Op_h( 2 \rho(x_n) \chi_\alpha'(x_n) \beta_\delta^2(x',\xi') \, \xi_n^2  ) \phi_h \,,\, {\phi_h} \big \rangle_{L^2(M)}.
\end{multline*}
Microlocalizing the eigenfunctions near $S^*M$ by using the cut-off $\tilde \chi $ we obtain
\begin{multline*} 
\big\langle Op_h(  2 \, \chi_\alpha'(x_n) \beta_\delta^2(x',\xi') \, \xi_n^2  ) \phi_h \,,\, {\phi_h} \big \rangle_{L^2(\Omega_H)}=\\
=\big\langle Op_h( \tilde \chi \rho(x_n) 2 \,\chi_\alpha'(x_n) \beta_\delta^2(x',\xi') \, \xi_n^2  ) \phi_h \,,\, {\phi_h} \big \rangle_{L^2(M)} + O(h).\end{multline*}
Using that $\mu$ is the defect measure associated to $\{\phi_h\}$, and that the symbol $  \tilde \chi   \beta_\delta^2 \, \xi_n^2   \in C_c^\infty(T^*M)$, we obtain
\begin{align*}
&\big\langle Op_h( \tilde \chi \rho(x_n) 2\chi_\alpha'(x_n) \beta_\delta^2(x',\xi') \, \xi_n^2  ) \phi_h \,,\, {\phi_h} \big \rangle_{L^2(M)}= \\
&\hspace{6cm}=\int_{S^*M} 2\rho(x_n)\chi_\alpha'(x_n) \beta_\delta^2(x',\xi') \, \xi_n^2\, d\mu+o(1) \\
&\hspace{6cm}=\int_{S^*_{\Omega_H}M} 2\chi_\alpha'(x_n) \beta_\delta^2(x',\xi') \, \xi_n^2\, d\mu+o(1),
\end{align*}
as claimed.
\end{proof}

\begin{remark} \label{localize} By replacing the test operator $A_{\delta,\alpha}(h)$ with

\[\tilde{A}_{\delta, \alpha}(h):=Op_h(\beta_\delta^2(x',\xi')) \circ  f(x') \circ Op_h(\chi_\alpha(x_n)) \circ hD_\nu,\]

\noindent where $f \in C^{\infty}(H)$ and carrying out the same argument as in the proof of Theorem \ref{T: main}, it is easy to see that under the assumption $\mu_H( \pi^{-1}(\supp f)\cap N^*H)=0$,
$$ \int_H f \, \phi_h d\sigma_H = o(1) \quad \text{and} \quad \int_H f \,  h D_{\nu} \phi_h d\sigma_H = o(1).$$
\end{remark}

\section{Proof of Theorem \ref{local}}
To prove Theorem \ref{local} we need the following result.
\begin{lemma}
\label{l:rough}
Suppose $A\subset H$ has piecewise smooth boundary. Then for all $\epsilon>0$
$$
\|(1-Op_h(\beta_\delta))^* 1_A\|_{L^2(H)}=O_{\epsilon}(h^{\frac{1}{2}-\epsilon}).
$$
\end{lemma}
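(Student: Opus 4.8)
The plan is to estimate the Schwartz kernel of $(1-Op_h(\beta_\delta))^*$ applied to the rough test function $1_A$, exploiting the fact that $1-\beta_\delta$ vanishes to infinite order at $\xi'=0$ together with the fact that the only singularity of $1_A$ is on the $(n-2)$-dimensional boundary $\partial A$, which has measure zero in $H$. Writing, as in the proof of Lemma~\ref{L: microlocalize} and in local coordinates,
$$
(1-Op_h(\beta_\delta))^* 1_A(x) = \frac{1}{(2\pi h)^{n-1}} \iint e^{\frac{i}{h}\langle x-y,\xi'\rangle} a_\delta(x,\xi';h)(1-\chi_{2\delta})(|\xi'|_x) 1_A(y)\, d\xi'\, dy,
$$
I would split the $y$-integral into the region $|x-y|\geq h^{1-\epsilon'}$ and the region $|x-y|< h^{1-\epsilon'}$ for a small parameter $\epsilon'>0$ to be chosen. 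On the far region, repeated integration by parts in $\xi'$ using the operator $L$ from the proof of Lemma~\ref{L: microlocalize} (which is nonsingular since $|x-y|$ is bounded below and $(1-\chi_{2\delta})^{(k)}(0)=0$ for all $k$) gains a factor $h^N/|x-y|^N$ for every $N$; choosing $N$ large relative to $1/\epsilon'$ and $n$ shows this contribution is $O_\epsilon(h^\infty)$ pointwise and uniformly in $x$.

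The main point is the near region $|x-y|< h^{1-\epsilon'}$. Here I would not integrate by parts but simply bound $|(1-Op_h(\beta_\delta))^*1_A(x)|$ crudely by $C h^{-(n-1)} \cdot h^{(n-1)(1-\epsilon')} = C h^{-(n-1)\epsilon'}$ after doing the $\xi'$-integral over a ball of radius $O(\delta)$ (which costs $O(1)$) and the $y$-integral over a ball of radius $h^{1-\epsilon'}$ (which costs $O(h^{(n-1)(1-\epsilon')})$). That pointwise bound alone is not good enough; the gain comes from observing that this near-region contribution is negligible unless $x$ is within $O(h^{1-\epsilon'})$ of $\partial A$, because if $\dist(x,\partial A)\geq h^{1-\epsilon'}$ then $1_A(y)$ is constant on the ball $|x-y|<h^{1-\epsilon'}$ and one is back in the setting of Lemma~\ref{L: microlocalize}, giving $O_\epsilon(h^\infty)$. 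Thus the near-region term is supported (up to $O(h^\infty)$) in an $O(h^{1-\epsilon'})$-neighborhood of $\partial A$ in $H$, whose $\sigma_H$-measure is $O(h^{1-\epsilon'})$ since $\partial A$ is piecewise smooth of codimension one in $H$.

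Putting these together: $(1-Op_h(\beta_\delta))^*1_A = E_1 + E_2$ where $\|E_1\|_{L^\infty(H)} = O_\epsilon(h^\infty)$ and $E_2$ is supported in a set of measure $O(h^{1-\epsilon'})$ with $\|E_2\|_{L^\infty(H)} = O(h^{-(n-1)\epsilon'})$. Hence
$$
\|(1-Op_h(\beta_\delta))^* 1_A\|_{L^2(H)} \leq \|E_1\|_{L^2(H)} + \|E_2\|_{L^2(H)} \leq O_\epsilon(h^\infty) + C\, h^{-(n-1)\epsilon'}\, \big(h^{1-\epsilon'}\big)^{1/2},
$$
and choosing $\epsilon'$ small enough (in terms of $n$ and the target $\epsilon$) makes the exponent $\tfrac12 - \tfrac{\epsilon'}{2} - (n-1)\epsilon'$ at least $\tfrac12-\epsilon$, which is the claim. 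The main obstacle is making the decomposition into near/far regions clean and tracking that the near-region contribution genuinely localizes to a neighborhood of $\partial A$ — i.e., verifying that the non-stationary phase estimate can be run with the cutoff $\chi_1(|x-y|)$ replaced by $\chi(|x-y|/h^{1-\epsilon'})$ uniformly, keeping careful control of how the constants in the integration-by-parts depend on the scale $h^{1-\epsilon'}$. One must check that each application of $L$ costs at most $h \cdot h^{-(1-\epsilon')} = h^{\epsilon'}$ rather than a negative power of $h$, so that $N \gtrsim 1/\epsilon'$ iterations still beat all polynomial losses; this is where the piecewise-smoothness of $\partial A$ (ensuring the tubular neighborhood has the expected volume) and the infinite-order vanishing of $1-\chi_{2\delta}$ at the origin both get used.
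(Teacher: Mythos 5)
Your strategy is viable and arrives at the right exponent, but it is genuinely different from the paper's, and one step needs repair before it closes. The paper does not decompose the integration region; it decomposes the \emph{function}: it writes $1_A=\chi_h 1_A+(1-\chi_h)1_A$ with $\chi_h$ a cutoff at scale $h^{1-\epsilon}$ around $\partial A$ satisfying $|\partial^\alpha\chi_h|\leq C_\alpha h^{-|\alpha|(1-\epsilon)}$. The smooth piece $(1-\chi_h)1_A$ is handled exactly as in Lemma~\ref{L: microlocalize} (all integrations by parts are now legitimate because the function is smooth, and each one gains a positive power of $h$), giving $O(h^\infty)$ in $L^\infty$; the rough piece is killed not by a kernel bound but by $L^2$-boundedness of $(1-Op_h(\beta_\delta))^*$ together with $\|\chi_h 1_A\|_{L^2}=O(h^{(1-\epsilon)/2})$. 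This buys two things over your route: there is no $h^{-(n-1)\epsilon'}$ loss to absorb (hence no dimension-dependent choice of $\epsilon'$), and one never has to estimate the kernel near the diagonal at all.

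The step in your argument that fails as written is the near-diagonal bound. In the representation you quote, the amplitude is $a_\delta(x,\xi';h)(1-\chi_{2\delta})(|\xi'|_x)$, which is supported in $\{|\xi'|\geq\delta\}$ and is \emph{not} integrable in $\xi'$; you cannot ``do the $\xi'$-integral over a ball of radius $O(\delta)$'' and get $O(1)$, and on the region $|x-y|<h^{1-\epsilon'}$ you cannot first integrate by parts to force convergence because $|x-y|$ degenerates there. The fix is to abandon the composed representation on that region and write $(1-Op_h(\beta_\delta))^*1_A=1_A-Op_h(\beta_\delta)^*1_A$: the identity term contributes $1_A(x)$ exactly, and the symbol of $Op_h(\beta_\delta)^*$ \emph{is} supported in $|\xi'|\lesssim\delta$, so the crude bound $O(h^{-(n-1)\epsilon'})$ near the diagonal goes through. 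With that repair the rest of your outline is sound: on the far region you must integrate by parts only in $\xi'$ (the full operator $L$ of Lemma~\ref{L: microlocalize} differentiates in $x'$, which is not allowed against $1_A$), each such integration by parts gains $h/|x-y|\leq h^{\epsilon'}$ while producing decay $\langle\xi'\rangle^{-1}$ in the symbol, and the localization to the $O(h^{1-\epsilon'})$-tube around $\partial A$ together with the exponent count $\tfrac12-\tfrac{\epsilon'}{2}-(n-1)\epsilon'\geq\tfrac12-\epsilon$ gives the claim.
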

\begin{proof}
To prove this result we first introduce a cut-off function $\chi_h $ so that  $(1-\chi_h)1_A$  is smooth and close to $1_A$.
Let $\chi_h \in C_c^\infty(H)$  satisfy
\begin{enumerate}
\item[i)] $\chi_h \equiv 1\quad \text{ on }\{x\in H:\; d(x,\partial A)\leq h^{1-\epsilon}\}$. \medskip
\item[ii)]  $\supp \chi \subset \{x\in H:\; d(x,\partial A)\leq 2h^{1-\e}\}$.  \medskip
\item[iii)]  $|\partial_x^\alpha \chi |\leq C_\alpha h^{|\alpha|(1-\e)}.$ \label{e:derEst} \medskip
\end{enumerate}
Then, $(1-\chi_h)1_A$ satisfies the same bound as in (iii), and hence integrating by parts as in Lemma~\ref{L: microlocalize} i.e. with 
$L:= \frac{1}{|x-x'|^2 + |\xi'|^2} \left( \sum_{j=1}^n \xi_j'  hD_{x'_j} +  \sum_{j=1}^n (x'_j-x_j) hD_{\xi'_j} \right),$
 gives
\begin{align*}
&[(1-Op_h(\beta_\delta))^*(1-\chi_h)1_A](x)\\
&\qquad=\frac{1}{(2\pi h)^{n-1}}\iint e^{\frac{i}{h}\langle x-x',\xi'\rangle} (1-\beta_\delta(x',\xi'))(1-\chi_h(x'))1_A(x')dx'd\xi'\\
&\qquad= \frac{1}{(2\pi h)^{n-1}}\iint e^{\frac{i}{h}\langle x-x',\xi'\rangle}(L^*)^N\big[(1-\beta_\delta(x',\xi'))(1-\chi_h(x'))1_A(x')\big]dx'd\xi'\\
&\qquad=O_N(h^{1-n+N(1-\e)}).
\end{align*}
In particular,
\begin{equation}
\label{e:awayEst}
\|(1-Op_h(\beta_\delta))^*(1-\chi_h)1_A\|_{L^\infty}=O_{\epsilon}(h^\infty).
\end{equation}
On the other hand
\begin{equation}
\label{e:nearEst}
\|\chi_h1_A\|_{L^2(H)}=O(h^{\frac{1-\epsilon}{2}}).
\end{equation}
Combining~\eqref{e:awayEst} and~\eqref{e:nearEst} together with $L^2$ boundedness of $Op_h(\beta_\delta)$ proves the lemma.
\end{proof}

\subsection{Proof of Theorem \ref{local}}
 Let $A \subset H$ be an open subset with piecewise $C^{\infty}$ boundary and indicator function $\chi_{A}.$ Suppose that $U\subset H$ is open with $\overline{A}\subset U$. Then since $C^{\infty}(H)$ is dense in $L^2(H)$, for any $\ep>0$, we can find $f \in C^{\infty}(H)$
$$ \| f - 1_{A} \|_{L^2(H)} \leq \ep,\qquad \supp f\subset U.$$
Now,
 \begin{align}
 &\Big| \int_{H} 1_A \phi_h d\sigma_H  \Big| \leq \notag\\
 &  \leq \Big| \int_H 1_AOp_h(\beta_\delta) \phi_h d\sigma_H \Big| + \Big| \langle(1-Op_h(\beta_\delta)) \phi_h ,1_A\rangle_H \Big|\nonumber\\
& \leq \Big| \int_H (1_A- f) Op_h(\beta_\delta) \phi_h d\sigma_H \Big|+ \Big| \int_H f Op_h(\beta_\delta) \phi_h d\sigma_H \Big|+ \Big| \langle \phi_h, (1-Op_h(\beta_\delta))^*1_A \rangle_H \Big|\nonumber  \\
 &\leq\Big| \int_H (1_A- f) Op_h(\beta_\delta) \phi_h d\sigma_H \Big| + o(1). \label{e:octopus}
\end{align}
The last line follows by applying Lemma~\ref{l:rough}, the universal upper bound $\|\phi_h\|_{L^2(H)}\leq Ch^{-\frac{1}{4}}$ \cite{BGT} and Cauchy-Schwarz to the third term, and by applying Remark~\ref{localize} to the second term.

Now, since $\beta_\delta$ is supported away from 
$$S^*H:=\{(x',\xi')\in T^*H:\; |\xi'|_{x'}=1\},$$
we have that $\|Op_h(\beta_\delta)\phi_h\|_{L^2(H)}\leq C$ \cite{BGT,T} and hence applying Cauchy--Schwarz to~\eqref{e:octopus}
$$
\Big|\int_H 1_A\phi_hd\sigma_H\Big|\leq C\e +o(1).
$$
Since $\e>0$ was arbitrary, the theorem follows.
\qed

\begin{remark}
It is clear from the proof of Theorem~\ref{local} that one can decrease the regularity assumption on $\partial A$ and only assume that $\partial A$ has Minkowski box dimension $<n-\frac{3}{2}$ where $n=\dim M$. However, we do not pursue this here.
\end{remark}

%
%
\section{Examples}\label{S: examples}

\subsection{Non vanishing averages on the torus}\label{S:torus}
Let $\mathbb T^2$ be the $2$-dimensional square flat torus. We identify $\mathbb T^2$ with $\{(x_1,x_2): \, (x_1,x_2) \in [0,1)\times [0, 1)\}$. Consider the sequence of normalized eigenfunctions
\[\phi_h(x_1,x_2)= e^{\frac{i}{h} x_1}.\]
Consider the curve $H \subset \mathbb T^2$ defined as $H=\{(x_1,x_2):\;x_1=0\}$. Then, since $\phi_h|_H \equiv 1$, we have 
\[\int_H \phi_h d\sigma_H =1, \quad h^{-1} \in 2\pi \Z^+.\]
We claim that in this case the measure $\mu$ associated to $\{\phi_h\}$ is \emph{not} conormally diffuse with respect to $H$. Actually, we next prove that 
\begin{equation}\label{E: mu on torus}
\mu(x_1, x_2, \xi_1, \xi_2)=\delta_{(1,0)}(\xi_1, \xi_2) \cdot dx_1 \, dx_2, \quad\qquad (x,\xi) \in S^* \mathbb T^2.
\end{equation}
Given \eqref{E: mu on torus}, it follows that 
\[\mu_{H}= \delta_{(1,0)}(\xi_1, \xi_2), \quad\qquad (x,\xi) \in S^* \mathbb T^2.\] In particular, 
\[\mu_{H}(N^*H)=1, \]
so the measure $\mu$ is \emph{not} conormally diffuse with respect  to $H$.

To see that \eqref{E: mu on torus} holds, fix any $a \in C^\infty_c(T^* \mathbb T^2)$. Then, 
\[\langle Op_h(a) \phi_h,  \phi_h\rangle = \frac{1}{(2\pi h)^n} \int_{\mathbb T^2} \int_{\mathbb T^2} \int_{\R^2} a(x,\xi) e^{\frac{i}{h} \psi(x,y,\xi)} d\xi dy dx \]
for the phase function
\[\psi(x,y,\xi):= \langle x-y, \xi \rangle + y_1-x_1.\]
We next do Stationary Phase in $(y, \xi)$. The critical points for the phase are $(y,\xi)=(x,(1,0))$. Also,  
$$\text{Hess}_{(y, \xi)} \psi=\begin{pmatrix} 0 & -1 \\ -1 &0\end{pmatrix}.$$
It follows that 
\[\langle Op_h(a) \phi_h,  \phi_h\rangle =  \int_{\mathbb T^2}  a(x, (1,0))  dx =  \int_{S^* \mathbb T^2} a(x,\xi) \, \delta_{(1,0)}(\xi) dx,\]
as claimed.

\subsection{Defect measures that are not Liouville}
As we already pointed out in the Introduction, the assumptions on $\mu$ for being conormally diffuse are much weaker than asking $\mu$ to be absolutely continuous with respect to the Liouville measure on $S^*M$.
In these examples we build a defect measure $\mu$ that is not absolutely continuous with respect to the Liouville measure but still satisfies the hypothesis of  Theorem \ref{T: main} for a suitable choice of curve $H$.

\subsubsection{Toral Eigenfunctions} Let $\mathbb T^2$ be the $2$-dimensional square flat torus.
We identify $\mathbb T^2$ with  $\{(x_1,x_2): \, (x_1,x_2) \in [0,1)\times [0, 1)\}$. Consider the sequence of eigenfunctions
\[\phi_h(x_1,x_2)=e^{\frac{i}{h}x_1}, \quad h^{-1} \in 2\pi \Z.\] 
As shown in Section \ref{S:torus}, the associated defect measure is $$\mu(x_1, x_2, \xi_1, \xi_2)= \delta_{(1,0)}(\xi_1, \xi_2)dx_1 \, dx_2.$$ 
Next, consider the curve $H \subset \mathbb T^2$ defined as $H=\{(x_1,x_2):\;x_2=0\}$.  
Since 
$N^*H=\{(x_1, x_2, \xi_1, \xi_2) \in S^* \mathbb T^2:\; \xi_1=0\}$, we have for $\delta >0$ sufficiently small,
\[\mu_{H}( N^*H )=0.\] 

Theorem \ref{T: main} therefore implies that
\[\lim_{h\to 0^+}\int_H \varphi_h d\sigma = \lim_{h\to 0^+} \int_0^1 e^{\frac{i x_1}{h}} dx_1 = 0.\]

Of course, in this case the much stronger result $ \int_0^1 e^{\frac{i x_1}{h}} dx_1= 0$ holds for all $h^{-1} \in 2\pi \Z.$ 

\subsubsection{Gaussian Beams}
Consider the two dimensional sphere   $S^2$ equipped with the round metric, and use coordinates
$$ (\theta,\omega)\mapsto (\cos \theta \cos \omega,\sin \theta\cos \omega,\sin \omega)\in S^2,$$
with $[0,2\pi)\times [-\pi/2,\pi/2]$.
For each of the frequencies $h^{-1}=\sqrt{l(l+1)}$ with $\ell \in \mathbb N$ we associate the Gaussian beam
$$
\phi_h(\theta,\omega)= \frac{1}{2^ll!}\Big(\frac{2l+1}{4\pi (2 l)!}\Big)^{\frac{1}{2}}\, e^{-i l \theta} (\cos \omega)^l .
$$
It is normalized so that
$$\|\phi_h\|_{L^2(S^2)}=1,\qquad (-h^2\Delta_{S^2}-1)\phi_h=0.$$

Then, let $\chi \in C_c^\infty(-1,1)$ with $\chi \equiv 1$ on $[-\frac{1}{2},\frac{1}{2}]$ and define
$$
u_h(\theta,\omega)= \frac{1}{2^ll!}\Big(\frac{2l+1}{4\pi (2 l)!}\Big)^{\frac{1}{2}}\, e^{-il\theta}\chi(\omega){ e^{-l\omega^2/2}}.
$$
Observe that 
$$u_h-\phi_h=o_{L^2}(1),$$
so for the purposes of computing the defect measure, we may compute with $u_h$. Using this, by an elementary stationary phase argument, (see e.g. \cite[Section 5.1]{EZB})
the defect measure associated to $\phi_h$ is 
$$
\mu=\frac{1}{2\pi}\delta_{\{\omega=0,\xi=-1,\zeta=0\}}d\theta
$$
where $\xi$ is dual to $\theta$ and $\zeta$ is dual to $\omega$. Let $H=\{(\theta, \omega): \, \omega=0\}$ be the equator. In particular,   $N^*H=\{(\theta, \omega, \xi, \zeta) \in S^*S^2:\; \omega=0,\xi=0,\zeta=\pm 1\}$. Then, 
$$\mu_H(N^*H)=\mu(\{\omega\in(-t_0,t_0),\xi=0,\zeta=\pm1\})=0$$
and Theorem~\ref{T: main} implies
$$
\int_{H}\phi_h (\theta,0)d\theta=o(1).
$$

\section{Appendix on Semiclassical notation}\label{S:appendix}

We next review the notation used for semiclassical operators and symbols and some of the basic properties. First, recall that for a compact manifold $M$ of dimension $n$, we write
$$S^m(T^*M):=\{a(\cdot;h) \in C^\infty(T^*M):\;  |\partial_x^\alpha\partial_\xi^\beta a(x,\xi;h)|\leq C_{\alpha\beta}(1+|\xi|)^{m-|\beta|}\}.$$
We write $\Psi^m(M)$ for the semiclassical pseudodifferential operators of order $m$ on $M$ and 
$$ Op_h:S^m(T^*M)\to \Psi^m(M)$$
for a quantization procedure with $Op_h(1)=\Id+O_{\mathcal{D}'\to C^\infty}(h^\infty)$ and for $u$ supported in a coordinate patch, $\varphi\in C_c^\infty(M)$ with $\varphi\equiv 1$ on $\supp u$ we have
$$Op_h(a)u(x)=\frac{1}{(2\pi h)^n}\iint e^{\frac{i}{h}\langle x-y,\xi\rangle}\varphi(x)a(x,\xi)u(y)d\xi dy +O_{\mathcal{D'}\to C^\infty}(h^\infty)u.$$
 Then there exists a principal symbol map 
 $$
 \sigma:\Psi^m(M)\to S^m(T^*M)/hS^{m-1}(T^*M) 
 $$
 so that 
 \begin{gather*} 
 Op_h\circ \sigma (A)=A+O_{\Psi^{m-1}}(h),\quad A\in \Psi^m,\qquad\qquad\sigma \circ Op_h=\pi : S^m\to S^m/hS^{m-1},
 \end{gather*}
 where $\pi$ is the natural projection map. Moreover, for $A\in \Psi^{m_1},\,B\in\Psi^{m_2}$, \medskip
\begin{itemize} 
\item $\sigma(AB)=\sigma(A)\sigma(B)\;\; \in S^{m_1+m_2}/hS^{m_1+m_2-1},$ \medskip
\item  $\sigma([A,B])=\frac{h}{i}\big\{\sigma(A),\sigma(B)\big\}\;\;\in hS^{m_1+m_2-1}/h^2S^{m_1+m_2-2},$ \medskip
 \end{itemize}
 where $\{\cdot,\cdot\}$ denotes the poisson bracket. For more details on the semiclassical calculus see e.g. \cite[Chapters 4,14]{EZB} \cite[Appendix E]{ZwScat}. 
 
 Finally, we recall the for any $\{u(h)\}_{0<h<h_0}\subset L^2(M)$ a bounded family of functions, we may extract a subsequence $h_k\to 0$ so that for $a\in C_c^\infty(T^*M)$,
 $$
 \langle Op_h(a)u_{h_k},u_{h_k}\rangle_{L^2(M)}\underset{h_k\to 0}{\to }\int a(x,\xi)d\mu
 $$ 
 for a positive Radon measure $\mu$. We call $\mu$ a \emph{defect measure for $u_{h_k}$}. For $p\in S^m(T^*M)$ real valued, if $u(h)$ solves 
 $$Op_h(p)u=o(h),\qquad \|u(h)\|_{L^2}=1,$$
then for any defect measure $\mu$ associated to $u(h)$, 
$$\supp \mu \subset \{p(x,\xi)=0\},\qquad \exp(tH_p)_*\mu =\mu$$
where $H_p$ denotes the Hamiltonian vector field associated to $p$. See e.g. \cite[Chapter 5]{EZB} for more details.

\bibliography{biblio.bib}
\bibliographystyle{amsalpha}
\end{document}